\newcommand{\+}{\oplus}
\renewcommand{\.}{\odot}
\renewcommand{\/}{\oslash}
\newcommand{\0}{0_\T}
\newcommand{\1}{1_\T}
\newcommand{\CP}{\mathbb{CP}}
\newcommand{\C}{\mathbb{C}}
\newcommand{\N}{\mathbb{N}}
\newcommand{\R}{\mathbb{R}}
\newcommand{\T}{\mathbb{T}}
\newcommand{\TT}{\mathcal{T}}
\renewcommand{\AA}{\mathcal{A}}
\newcommand{\Z}{\mathbb{Z}}
\newcommand{\w}{\omega}
\renewcommand{\L}{\mathcal{L}}
\DeclareMathOperator{\val}{val}
\DeclareMathOperator{\Val}{Val}
\newtheorem{thm}{Theorem}[section]
\newtheorem{lem}[thm]{Lemma}
\newtheorem{cor}[thm]{Corollary}
\theoremstyle{definition}
\newtheorem{defn}{Definition}[section]
\newtheorem{eg}{Example}[section]
\theoremstyle{remark}
\newtheorem{rmk}{Remark}[section]
\title{Working with Tropical Meromorphic Functions of One Variable}
\subjclass[2010]{14A10,52B20,30C15} 
\keywords{Tropical geometry, Tropical meromorphic function, Tropical polynomial}
\author{Yen-lung Tsai}
\address{Yen-lung Tsai\\
{\tt {yenlung@nccu.edu.tw}}\\
Department of Mathematical Sciences\\
National Chengchi University\\
NO.64, Sec.2, ZhiNan Rd.,Wenshan District,
Taipei 11605, Taiwan}
\thanks{This research was supported by the National Science Council (NSC 99-2115-M-004-003-, and 98-2115-M-004-002-).}
\begin{document}
\maketitle
\begin{abstract}
In this paper, we survey and study definitions and properties of tropical polynomials, tropical rational functions and in general, tropical meromorphic functions, emphasizing practical techniques that can really carry out computations. For instance, we introduce maximally represented tropical polynomials and tropical polynomials in compact forms to quickly find roots of given tropical polynomials. We also prove the existence and uniqueness of tropical theorems for meromorphic functions with prescribed roots and poles.  Moreover, we explain the relations between classical and tropical meromorphic functions. Different definitions and applications of tropical meromorphic functions are discussed. Finally, we point out the properties of tropical meromorphic functions are very similar to complex ones and prove some tropical analogues of theorems in complex analysis.
\end{abstract}

\section{Introduction}
Tropical geometry has been rapid developed in recent years. Roughly speaking, tropical geometry study the image of classical geometric objects through a certain valuation map. Therefore, one can expect the properties we find in tropical geometry somehow reflect the properties in classical geometry. Indeed, many important applications has been carry out. Among many others, Mikhalkin~\cite{mikhalkin03, mikhalkin05} calculated Gromov-Witten invariants in $\CP^2$; Itenberg, Kharlamov, and Shustin~\cite{iks03} calculated Welschinger invariants. Besides, there are also many mathematicians try to understand mirror symmetry through tropical geometry~\cite{g10}.

On the other hand, since tropical geometry is naturally related to combinatorics analysis and convex optimization. It provides lots of possible applications to real world also. For instance, Pacher and Sturmfels~\cite{ps04} applied it to statistics. As we will see, the graph of a tropical meromorphic function is piecewise linear. This kind of functions arise in real world and with tropical geometry, we could study these functions directly with some standard analysis techniques. For example, we can apply techniques of Nevanlinna theory to solve difference equations~\cite{hs09, lt09}.

Though with so many interesting applications, the foundation of tropical geometry is far from complete, and some  are known to experts but hardly find in single monograph. In this paper, we survey and study very basic elements in tropical geometry, namely tropical polynomials, rational functions, and meromorphic functions. We focus on the concepts of the definitions, and the techniques that one can really compute. In short, we would like to provide necessary tools to work with tropical meromorphic functions.

One important and interesting thing we want to point out is that the tropical mathematics though work on real numbers, but indeed also reflect the world of complex geometry. Many properties of tropical meromorphic functions act like complex meromorphic functions. For instance, we have tropical version of the Fundamental Theorem of Algebra, Liouville's Theorem, and (modified) Maximum Modulus Theorem.

One problem to deal with tropical meromorphic functions is that there are some subtle different definitions in different contexts. The most natural choice of definition is that $-\infty$ (the tropical zero) can be a root or pole of some tropical meromorphic functions, and those are the major definition of tropical meromorphic functions for us. Howerver, sometimes it is convenient to work with tropical meromorphic functions defining on $\R$ (no $-\infty$) only, and we will call these $\R$-tropical meromorphic functions. Moreover, tropical meromorphic functions are piecewise linear functions of integer slopes. We can drop the integer slope assupmtion, allowing real slopes, and we will call the new type of functions the extended tropical meromorphic functions.

\section{Tropical Polynomials}

\begin{defn} [Tropical Semiring]\label{semiring}
Let $\T = \R \cup \{ -\infty \}$. The tropical semiring $(\T, \+, \.)$ is an algebraic structure with two binary operators defined as followings.
\[
\begin{cases}
a \+ b = \max \{a,b\},\\
a \. b = a + b.
\end{cases}
\]
\end{defn}

We can easily find out that the additional identity of the tropical semiring is $\0 = -\infty$ and the multiplicity identity is $\1 = 0$. There is no subtraction in tropical semiring. For example, the equation $2 \+ k = 3$ yields no solution for $k$. Strangely enough, since the tropical multiplication is just the original addition, so we do have division in tropical semiring, namely $x \/ y = x - y$.

One can actually define the tropical semiring in different ways. For example, the addition of the tropical semiring can be defined as $a \+ b = \min \{a, b\}$. We call the semiring the min-plus tropical semiring. We will focus on the semiring as in the Definition~\ref{semiring}, which we call the max-plus tropical semiring, and we usually just call it the tropical semiring.

Naturally, we will define the tropical polynomial as a polynomial with coefficients in $\T$ and computed by the tropical addition and the tropical multiplication.

\begin{defn}[Tropical Polynomial]
A \emph{tropical polynomial} $f(x)$ is of the form 
\[
a_n \. x^{\. n} \+ a_{n-1} \. x^{\. (n-1)} \+ \cdots \+ a_0,
\]
where $n$ is a positive integer, and $a_0, a_1, \ldots, a_n \in \T$. Evaluate $f(x)$, we obtain
\[
f(x) = \max \{ nx + a_n, (n-1)x + a_{n-1},  \ldots, a_0 \}
\]
We usually omit the terms with the coefficients $-\infty$.
\end{defn}

\begin{rmk}
As in classical polynomials, a tropical monomial $x$ means $1_\T \cdot x$, but $1_\T = 0$, so $x = 0 \. x$. Moreover, a polynomial $x \. (x \+ 2) = (x \+ -\infty) \. (x \+ 2)$.
\end{rmk}

We now give two examples to elaborate how do we define roots of a tropical polynomial and the multiplicities of these roots.

\begin{eg}
Let $f(x) = x^{\. 2} \+ 2\. x \+ 3$. Figure~\ref{fig:poly00} shows the graph of the function. Note that $f(x) = (x \+ 1) \. (x \+ 2)$ (as a function), so we can think the points $x=1$ and $x=2$ are the ``roots'' of $f(x)$. These are exactly the points where the function $f$ is not differentiable. On the other hand, $f(x) = \max \{ 2x, 2+x, 3\}$ by definition. Note that $x=1$ and $x=2$ are the points where the maximum achieves at least twice in these linear terms.

As in classical cases, we should call $x=1$ and $x=2$ the roots of polynomial $f(x)$, each with multiplicities $1$. At the point $x=1$, we calculate change of the slopes at that point, we have $\lim_{x \to -1^+} f'(x) - \lim_{x \to -1^-} f'(x) = 1$. Similarly, calculate change of the slopes of the graph at $x=2$, we get $1$, which coincide with the multiplicities we expect of these points. 
\begin{figure}[h!]
\begin{center}
\includegraphics[scale=0.7]{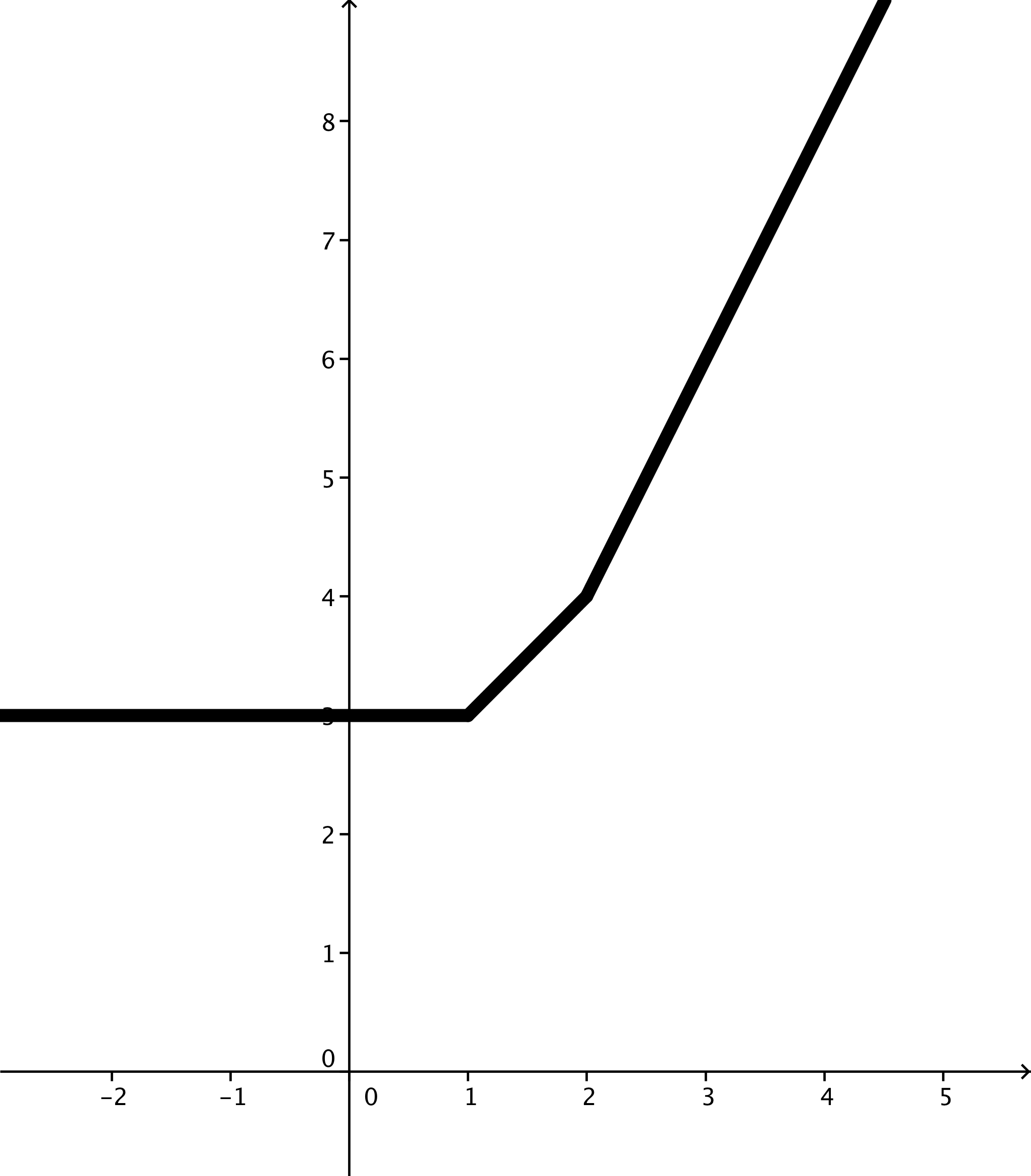}
\caption{The graph of $f(x) = x^{\. 2} \+ 2\. x \+ 3$.}\label{fig:poly00}
\end{center}
\end{figure}
\end{eg}

\begin{eg}
Let $f(x) = (x \+ 1)^{\. 2}$. Figure~\ref{fig:poly01} shows the graph of the function. At the point $x=1$, the change of slopes is $2$, which again coincide with the multiplicity we expect. 
\begin{figure}[h!]
\begin{center}
\includegraphics[scale=0.7]{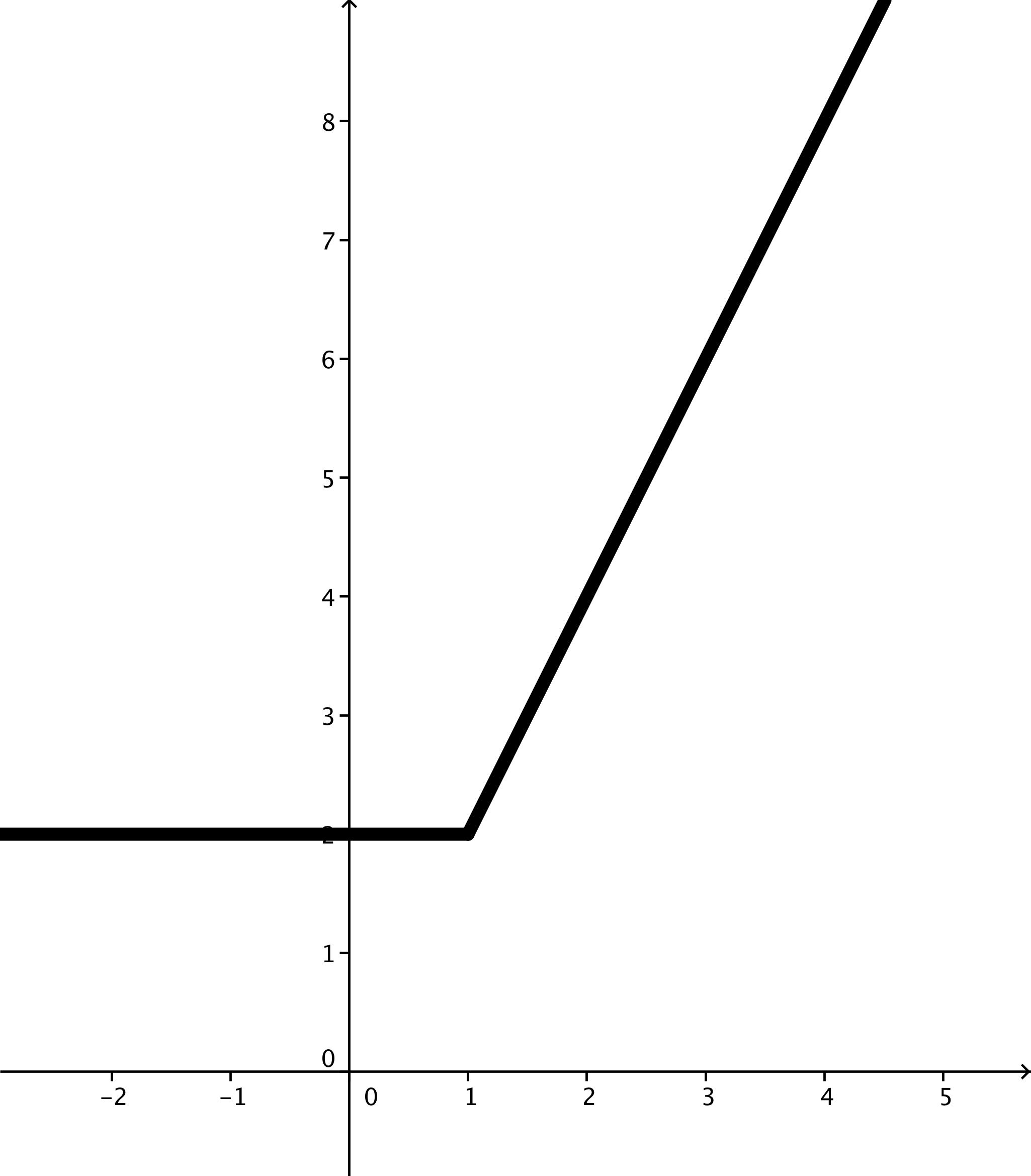}
\caption{The graph of $f(x) = (x \+ 1)^{\. 2}$.}\label{fig:poly01}
\end{center}
\end{figure}
\end{eg}

The examples motivate the following definitions, we basically follow the definitions in~\cite{hs09, lt09}.

\begin{defn}\label{defn:rmero}
A \emph{tropical meromorphic function} $f\colon \R \to \R$ is a continuous piecewise linear function on $\R$ if both one-sided derivatives are integers at each point $x \in \R$. 
\end{defn}

In this paper, if $f(x)$ is a tropical meromorphic function defined on $\R$, we call $f(x)$ an \emph{$\R$-tropical meromorphic function}. The multiplicity of a root is the change of slopes. 

\begin{defn}\label{defn:mult}
Let $f(x)$ be an $\R$-tropical meromorphic function. Let 
\[
\w_f(x) = \lim_{\varepsilon \to 0^+} \left[ f'(x+\varepsilon) - f'(x-\varepsilon) \right].
\]
\begin{enumerate}[(a)]
\item If $\w_f(x) > 0$, we call $x$ a \emph{root} of $f(x)$ with multiplicity $\w_f(x)$.
\item If $\w_f(x) < 0$, we call $x$ a \emph{pole} of $f(x)$ with multiplicity $-\w_f(x)$.
\end{enumerate}
\end{defn}

We can also define a tropical meromorphic function on $\T$.

\begin{defn}
We say that a function $f$ is a \emph{tropical meromorphic function on $\T$} if 
\begin{enumerate}[(a)]
\item $f$ is a tropical meromorphic function on $\R$, and
\item there exist $x_0 \in \R$ such that $f'(x_0)$ is constant for all $x < x_0$.
\end{enumerate}
\end{defn}

\begin{defn}
Let $f(x)$ be a tropical meromorphic function on $\T$. There is a number $x_0 \in \R, m \in \Z$ such that $f'(x) = m$ for all $x < x_0$.
\begin{enumerate}[(a)]
\item We say that $-\infty$ is a pole of $f$ if $m < 0$.
\item We say that $-\infty$ is a root of $f$ if $m > 0$.
\item We say that $-\infty$ is an ordinary point if $m=0$.
\end{enumerate}
\end{defn}

We will mainly discuss the tropical meromorphic functions on $\T$. Unless stated otherwise, when we say $f(x)$ is a tropical meromorphic (polynomial, rational) function, we mean that $f(x)$ is a tropical meromorphic (polynomial, rational) function on $\T$.

\begin{defn}\label{defn:rational}
Let $f(x)$ be a tropical rational function if there are two tropical polynomials $g(x)$, $h(x)$ such that 
\[
f(x) = h(x)\/g(x).
\]
\end{defn}

We will see these definitions are reasonable. For instance, in Section~\ref{s:prescribed}, we will show that a tropical meromorphic function $f(x)$ is a tropical rational function if and only if $f(x)$ has finitely many roots and poles.
\section{Maximally Represented Tropical Polynomials}
For complex or real cases, different polynomials define different functions. However, different tropical polynomials might define the same function as the following example shows. 

\begin{eg}\label{eg:differentforms}
As in Figure~\ref{fig:samegraph}, the graphs of $f(x) = (-1)\.x^{\. 2}  \+  x \+ 1$ and $g(x) = (-1) \. x^{\. 2}  \+ (-1) \. x \+ 1$ are the same, so polynomials $f(x)$ and $g(x)$ define the same function.
\end{eg}

\begin{figure}[h]
\begin{center}
\includegraphics[scale=1]{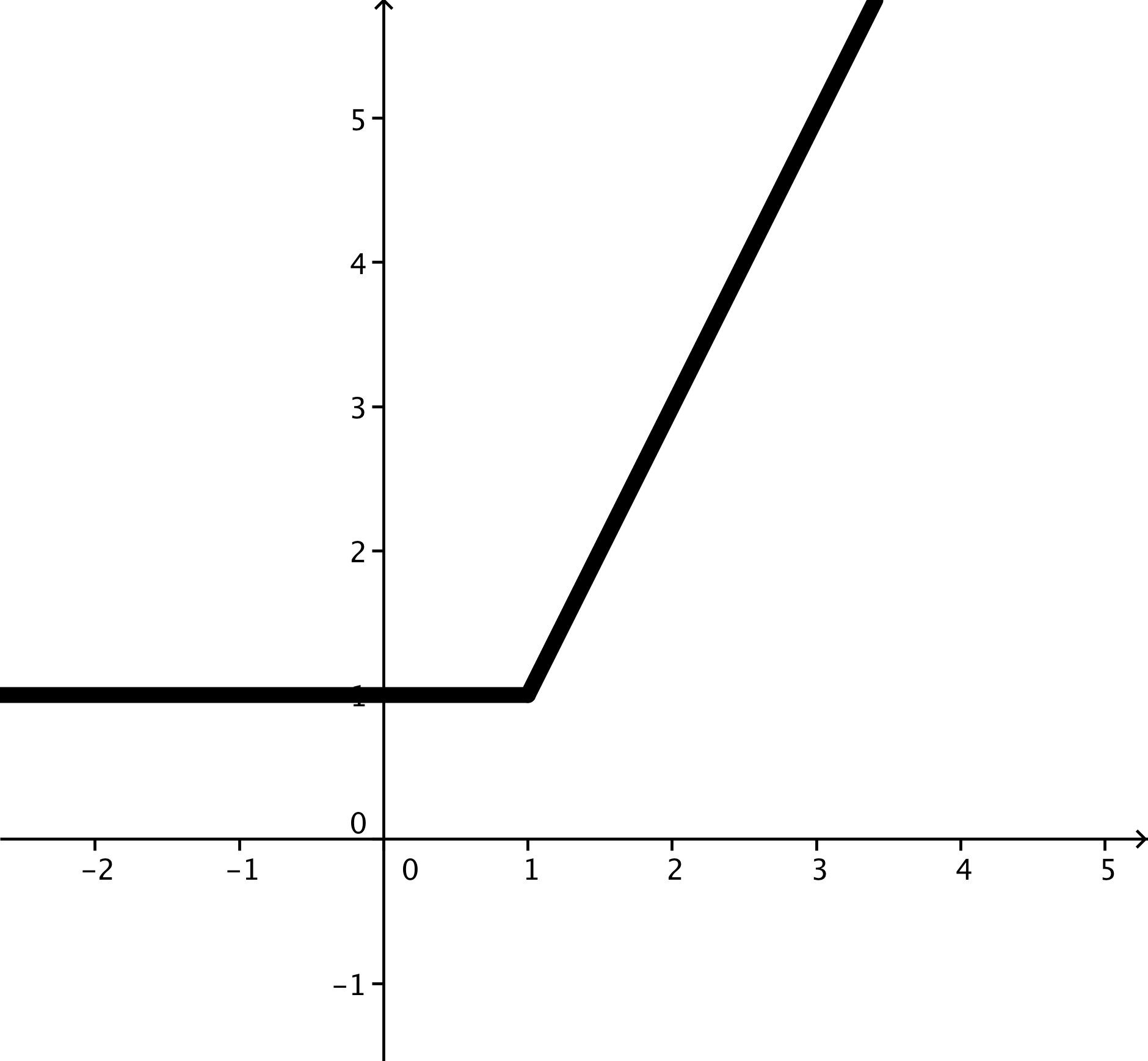}
\caption{The graphs of two different polynomials $f(x) =(-1)\.x^{\. 2}  \+  x \+ 1$ and $g(x) = (-1) \. x^{\. 2}  \+ (-1) \. x \+ 1$ are the same.}\label{fig:samegraph}
\end{center}
\end{figure}

\begin{defn}
Let $f(x)$ and $g(x)$ be two tropical polynomials. We say that $f(x)$ and $g(x)$ are \emph{equivalent} if $f(x)$ and $g(x)$ define the same function.
\end{defn}

Though different tropical polynomials might define the same function, we can choose a ``good'' representation out of a collection of equivalent tropical polynomials. As we will see, every tropical polynomial has a unique \emph{maximal representation}. Different maximally represented polynomials define different functions. We can use the Legendre transformation to get the desire maximal representation of a tropical polynomial in question.

First, we observe that any one-variable tropical polynomial defines a real convex function.
\begin{thm}
Let $f(x)$ be a tropical polynomial. Let $h\colon \R \to \R$ be the function defined by
\[
h = f|_\R.
\]
Then $h(x)$ is a convex function.
\end{thm}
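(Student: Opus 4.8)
The plan is to recognize the restricted function $h$ as a pointwise maximum of finitely many affine functions and then invoke the standard fact that such a maximum is convex. By definition, after discarding any terms whose coefficient is $-\infty$, we have
\[
h(x) = \max_{i \in S} \{ ix + a_i \},
\]
where $S \subseteq \{0, 1, \ldots, n\}$ is the (nonempty) set of indices with $a_i \in \R$. Each summand $g_i(x) = ix + a_i$ is an affine function of the single real variable $x$, and on $\R$ affine functions are convex (indeed, equality always holds in the convexity inequality for an affine function). Thus $h$ is a finite maximum of convex functions.

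The key step is to verify that a pointwise maximum of convex functions is again convex. First I would fix $x, y \in \R$ and $\lambda \in [0,1]$ and exploit the convexity (here, affinity) of each $g_i$ together with the elementary bound $\max_i u_i \le \max_i v_i$ whenever $u_i \le v_i$ for all $i$. Concretely, for each $i \in S$,
\[
g_i(\lambda x + (1-\lambda) y) = \lambda g_i(x) + (1-\lambda) g_i(y) \le \lambda h(x) + (1-\lambda) h(y),
\]
since $g_i(x) \le h(x)$ and $g_i(y) \le h(y)$ by the definition of $h$ as a maximum. Taking the maximum over $i \in S$ on the left-hand side then yields
\[
h(\lambda x + (1-\lambda)y) \le \lambda h(x) + (1-\lambda) h(y),
\]
which is precisely the statement that $h$ is convex.

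I do not expect a genuine obstacle here: the only matters that deserve attention are bookkeeping rather than substance. One should first observe that if every coefficient equals $-\infty$ then $f$ is identically $-\infty$ and does not restrict to a real-valued function, so the statement is understood for polynomials with at least one finite coefficient; this guarantees $S \ne \emptyset$ and hence that $h$ is a well-defined function $\R \to \R$. Beyond that, the entire argument reduces to the observation that convexity is preserved under finite pointwise maxima, so the proof is essentially contained in the three displayed lines above.
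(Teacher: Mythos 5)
Your proof is correct, but it takes a genuinely different route from the paper's. The paper argues locally: it notes that $h$ is continuous and piecewise linear, and then checks that at every point the jump in slope is nonnegative --- at a point $x_0$ where $h$ fails to be linear, $h$ is locally the maximum of two linear pieces $i_1 x + a_1$ and $i_2 x + a_2$ with $i_1 < i_2$, so the slope increases by $i_2 - i_1 > 0$ across $x_0$. That argument implicitly appeals to the fact that a piecewise linear function with non-decreasing slopes is convex, which the paper does not spell out; it has the virtue of matching the slope-change viewpoint ($\w_f$) that the paper uses throughout to define roots, poles, and multiplicities. Your argument is instead global and fully self-contained: you verify the convexity inequality directly from the representation of $h$ as a finite pointwise maximum of affine functions, using only the monotonicity of $\max$. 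This is the more standard and more robust proof --- it needs no case analysis at breakpoints, requires no appeal to properties of piecewise linear functions, and generalizes verbatim to infinite families of affine functions and to several variables (relevant to the two-variable tropical polynomials in Section~\ref{s:tropicalization}). Your bookkeeping remark about the degenerate polynomial with all coefficients equal to $-\infty$ is also a point the paper passes over silently.
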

\begin{proof}
Obviously, $h(x)$ is a continuous, piecewise linear function. Therefore, we just need to show that for each point, the change of slopes is either zero or positive. Now, if for any real number $x_0$, if in a neighborhood of $x_0$ that $h(x)$ is linear, then the change of slope is zero. If $x_0$ is a point that $h$ fails to be linear, then in a small neighborhood of $x_0$, $h(x)$ can be represented by the maximum of two linear functions, say, $i_1 x + a_1$ and $i_2 x + a_2$, where $i_2 > i_1$. Since $i_2 x_0 + a_2 = i_1 x_0 + a_1$. $i_2 (x_0 + \varepsilon) + a_2 > i_1 (x_0 + \varepsilon) + a_1$ for all $\varepsilon > 0$. Hence, 
\[
\lim_{\varepsilon \to 0^+} h(x_0 + \varepsilon) - h(x_0 - \varepsilon) = i_2 - i_1 > 0.
\]
Thus we conclude that the change of slope at any real number for the function $h(x)$ is either zero or positive.
\end{proof}

Let $h \colon \R^n \to \R$ be a convex function. The Legendre transform $\L_h \colon \R^n \to \R$ of $h(x)$ is defined by 
\[
\L_h (\alpha) = \max_{x \in \R^n} \{\alpha \cdot x - h(x)\}
\]

For the purpose of this paper, we consider that $h \colon \R \to \R$ is induced by a one-variable tropical polynomial $f(x) = a_n\. x^n \+ \cdots \+ a_r\. x^r$. Then $h(x)$ is just the function defined by $f(x)$ with restriction on $\R$. Note that $\L_h$ is just a partially defined function. For instance, let $h$ be the function induced by the tropical polynomial $x+2$, then the value of 
\begin{align*}
\L_h(2) 	&= \max\{2x - h(x)\} \\
			&= \max\{2x - \max\{x,2\}\}
\end{align*}
tends to infinity.

We modify the definition to suit our needs, when we say the Legendre transform, we really mean the modified one for the rest of the paper.

\begin{defn}[Modified Legendre Transform]\label{def:legendre}
Let $f(x) = a_n \. x^n \+ \cdots \+ a_r \. x^r$, be a one variable tropical polynomial, where $n, n-1, \ldots, r$ are integers such that $n \leq r$.  The modified \emph{Legendre transform} $\L_f$  corresponding to $f(x)$ is a real valued function defined on $\{r, r+1, \ldots, n\}$ such that
\[
\L_f(p) = \max_{x \in \R} \{p\cdot x - f(x)\}
\]
\end{defn}

It is easy to check that the modified Legendre transform we have is well-defined. That is, for all $p$ in the domain of the function, $\L_f(p)$ is a real number.

\begin{defn}\label{def:maxpoly}
A tropical polynomial $f(x) = a_n \. x^n \+ \cdots \+ a_r \. x^r$ is \emph{maximally represented} if
\[
a_i = -\L (i),
\]
for all $i \in \{r, r+1, \ldots, n\}$.
\end{defn}

It will be clear latter why we call these polynomials maximally represented. Using the Legendre transform to find the we call maximally represented polynomials is briefly discussed in~\cite{ims09, mikhalkin06}. The detailed properties and proof of min-plus tropical algebra version can be founded in~\cite{gm07}, where the polynomials corresponding to our maximally represented polynomials analogues in min-plus tropical algebra are called the least-coefficient polynomials.

Note that for an arbitrary tropical polynomial $f(x)$, we evaluate the function $f(x)$ with sufficiently large $x$, then the maximum of $\{ a_i + i x \}_{r \leq i \leq n} $ will be $a_n + nx$.
\begin{lem}\label{lem:m}
Let $f(x) = a_n \. x^n \+ \cdots \+ a_r \. x^r$ be a tropical polynomial.
\begin{enumerate}[(a)]
\item There is a number $M > 0$ such that for any $x > M$, $f(x) = a_n + nx$. 
\item There is a number $m<0$ such that for any $x<m$, $f(x) = a_r + rx$.
\end{enumerate}
\end{lem}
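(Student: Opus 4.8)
The plan is to work directly with the evaluated form
\[
f(x) = \max_{r \leq i \leq n}\{\, a_i + ix \,\},
\]
where I first discard every index $i$ with $a_i = -\infty$, since such a term never contributes to the maximum. After this reduction the maximum runs over a finite set of indices, and both $i=n$ and $i=r$ survive because $a_n$ and $a_r$ are the (finite) leading and trailing coefficients. The whole lemma is then just the observation that the term of largest slope wins for large $x$ and the term of smallest slope wins for very negative $x$.

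For part (a), I would compare the top term $a_n + nx$ against each surviving term $a_i + ix$ with $i < n$. The difference is
\[
(a_n + nx) - (a_i + ix) = (a_n - a_i) + (n-i)x,
\]
and since $n - i > 0$ this is positive precisely when $x > (a_i - a_n)/(n-i)$. Taking $M$ to be any number strictly larger than $0$ and than all of the finitely many thresholds $(a_i - a_n)/(n-i)$, I get that for every $x > M$ the top term strictly exceeds all the others, so the maximum is attained by the $n$-term and $f(x) = a_n + nx$.

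Part (b) is the mirror image, using the term of smallest slope. Comparing $a_r + rx$ against each surviving $a_i + ix$ with $i > r$, the difference is $(a_r - a_i) - (i-r)x$, which is positive exactly when $x < (a_r - a_i)/(i-r)$. Choosing $m$ strictly below $0$ and below all of the finitely many thresholds $(a_r - a_i)/(i-r)$ then forces $f(x) = a_r + rx$ for all $x < m$.

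I do not expect a genuine obstacle here: the argument is a finite comparison of affine functions and relies only on the strict inequalities $n - i > 0$ and $i - r > 0$. The single point that deserves a word of care is the bookkeeping of the $-\infty$ coefficients, which I handle at the outset by omitting those terms, so that the maximum is over finitely many honest affine functions and the extreme slopes $n$ and $r$ are actually attained.
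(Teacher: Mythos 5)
Your proof is correct and follows essentially the same route as the paper: compare the leading (resp.\ trailing) term against each other term, derive the finitely many thresholds $(a_i - a_n)/(n-i)$, and take $M$ beyond all of them. The only differences are cosmetic improvements — you explicitly discard the $-\infty$ coefficients and write out part (b), which the paper omits as ``very similar.''
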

\begin{proof}
We prove the part (a) and skip the proof for part (b), since the proofs are very similar.
Let $M_k = (a_k - a_n)/(n-k)$, for some integer $k$ such that $r \leq k \leq n-1$.  If $x > M_k = \frac{a_k - a_n}{n-k}$ $(n-k)x > a_k - a_n$. Hence, $a_n + nx > a_k +kx$. Take
\[
M = \max_{r \leq k \leq n-1} \{ M_k\},
\]
then for all $x > M$, we have $a_n + nx > a_k +kx$ for all $r \leq k \leq n-1$. Hence,
\[
\max_{r \leq i \leq n} \{ a_i + ix \} = a_n + nx,
\]
for all $x > M$.
\end{proof}
\begin{lem}
Let $f(x) = a_n \. x^n \+ \cdots \+ a_r \. x^r$ be a tropical polynomial. Let $g(x)$ be the polynomial obtained from $f(x)$ by adding one term $\alpha \. x^k$ such that $k>n$. Then $f(x)$ and $g(x)$ define different functions.
\end{lem}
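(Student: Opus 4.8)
The plan is to compare $f$ and $g$ for large positive $x$, where Lemma~\ref{lem:m} pins down the behavior of each. First I would observe that, as functions, $g(x) = \max\{f(x),\, \alpha + kx\}$, so $g(x) \geq f(x)$ everywhere; to separate the two functions it therefore suffices to exhibit a single point at which the new term $\alpha + kx$ strictly dominates $f(x)$. (I am assuming $\alpha \in \R$; if $\alpha = -\infty$ the added term is the tropical zero and $g \equiv f$, so the statement implicitly requires $\alpha$ to be an actual coefficient.)

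Next I would apply Lemma~\ref{lem:m}(a) to $f$: there is an $M > 0$ such that $f(x) = a_n + nx$ for all $x > M$. On this range, the inequality $\alpha + kx > f(x)$ reduces to $(k-n)x > a_n - \alpha$, and since $k > n$ gives $k - n > 0$, this holds for every $x > (a_n - \alpha)/(k-n)$. Choosing any $x_0 > \max\{M,\, (a_n - \alpha)/(k-n)\}$ then yields $g(x_0) = \alpha + kx_0 > a_n + nx_0 = f(x_0)$, so $f$ and $g$ disagree at $x_0$ and hence define different functions.

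An equivalent route, which I find conceptually cleaner, is to read off the eventual slopes. Applying Lemma~\ref{lem:m}(a) to $g$ --- whose highest-degree term is $\alpha \odot x^k$ with $k > n$ --- shows that $g$ is eventually the affine function of slope $k$, while $f$ is eventually the affine function of slope $n$; since $k \neq n$, these two lines agree in at most one point and cannot coincide on a neighborhood of $+\infty$. Either way there is essentially no obstacle here: the only points requiring a little care are excluding the degenerate case $\alpha = -\infty$ and making sure the chosen $x_0$ clears both the threshold $M$ coming from the lemma and the crossover point of the two lines.
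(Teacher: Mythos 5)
Your proposal is correct and matches the paper's argument: the paper likewise invokes Lemma~\ref{lem:m} to get the eventual affine behavior (applying it to both $f$ and $g$, exactly as in your second route) and then picks a large $x_0$ avoiding the single crossover point $(a_n-\alpha)/(k-n)$ so that $f(x_0)\neq g(x_0)$. Your primary route, using $g=\max\{f,\alpha+kx\}$ and only applying the lemma to $f$, is a minor variant of the same idea, and your explicit exclusion of the degenerate case $\alpha=-\infty$ is a point of care the paper leaves implicit.
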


\begin{proof}
By the Lemma~\ref{lem:m}, there exists a positive real number $M_1$ such that $f(x) = a_n + nx$ for $x > M_1$, and a positive real number $M_2$ such that $g(x) = \alpha + kx$ for $x > M_2$. Take a number $x_0 > \max\{M_1, M_2\}$ and $x_0 \neq (a_n - \alpha)/(k - n)$. Then $f(x_0) = a_n + n x_0 \neq \alpha + kx_0 = g(x_0)$.
\end{proof}
\begin{lem}\label{lem:leading}
Let $f(x) = a_n \. x^n \+ \cdots \+ a_r \. x^r$ be a tropical polynomial.
\begin{enumerate}[(a)]
\item Let $g(x)$ be a tropical polynomial obtained from $f(x)$ by substituting $a_n$ by a number $\alpha$, such that $\alpha > a_n$. Then $f(x), g(x)$ define different functions.
\item Let $g(x)$ be a tropical polynomial obtained from $f(x)$ by substituting $a_r$ by a number $\beta$, such that $\beta > a_r$. Then $f(x), g(x)$ define different functions.
\end{enumerate}
\end{lem}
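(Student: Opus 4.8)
The plan is to compare the asymptotic behaviour of $f$ and $g$ at the appropriate end of the real line, relying entirely on Lemma~\ref{lem:m}. For part (a) I would examine $x \to +\infty$, and for part (b) I would examine $x \to -\infty$.

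For part (a), note that $g(x)$ has exactly the same exponents $r, \ldots, n$ as $f(x)$, and differs from $f(x)$ only in that the coefficient of the top-degree term $x^{\. n}$ has been changed from $a_n$ to $\alpha$. In particular $\alpha$ is still the leading coefficient of $g$, so Lemma~\ref{lem:m}(a) applies to $g$ just as it does to $f$: there is a number $M_1 > 0$ with $f(x) = a_n + nx$ for all $x > M_1$, and a number $M_2 > 0$ with $g(x) = \alpha + nx$ for all $x > M_2$. Picking any $x_0 > \max\{M_1, M_2\}$ gives $g(x_0) - f(x_0) = \alpha - a_n > 0$, so $f(x_0) \neq g(x_0)$ and the two polynomials define different functions.

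For part (b) the argument is the mirror image. Here $\beta$ replaces $a_r$ as the coefficient of the bottom-degree term $x^{\. r}$, and this is the coefficient that governs the behaviour of both polynomials as $x \to -\infty$ via Lemma~\ref{lem:m}(b). Thus there exist $m_1, m_2 < 0$ with $f(x) = a_r + rx$ for $x < m_1$ and $g(x) = \beta + rx$ for $x < m_2$; choosing any $x_0 < \min\{m_1, m_2\}$ yields $g(x_0) - f(x_0) = \beta - a_r > 0$, so again the functions disagree.

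I do not expect a genuine obstacle here: the entire content is that raising the coefficient of a term which already dominates on one end of the line strictly raises the value of the function there. The one point that must be checked carefully is that increasing $a_n$ (respectively $a_r$) does not change which term is asymptotically dominant — but since we increase exactly the coefficient of the already-dominant term, Lemma~\ref{lem:m} still singles out that same term, and the strict inequalities $\alpha > a_n$ and $\beta > a_r$ supply the nonzero gap that separates the two functions.
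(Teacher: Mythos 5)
Your proof is correct, and it is precisely the argument the paper intends: Lemma~\ref{lem:leading} is stated in the paper without proof, but the immediately preceding lemma (adding a term of degree $k>n$) is proved by exactly your method --- invoke Lemma~\ref{lem:m} to identify the dominant linear piece of each polynomial at one end of $\R$, pick $x_0$ beyond both thresholds, and compare values there. Your observation that the strict inequalities $\alpha > a_n$ and $\beta > a_r$ give a nonzero gap $g(x_0)-f(x_0)$ at the chosen point completes both parts, so nothing is missing.
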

Now, we want to point out an important fact. That is, any tropical polynomial is equivalent to a tropical polynomial of the form:
\[
g(x) = a_n \. x^n \+ \cdots \+ a_r \.x^r, 
\]
where $a_i \in \R$ for all $i = r, r+1, \ldots, n$. That is, there is no missing term between $x^n$ and $x^r$. The idea of the proof is as followings. The graph of a tropical polynomial is convex piecewise linear. Suppose the polynomial $f(x)$ misses the $x^k$ term, where $r < k < n$. That is, the coefficient $a_k = -\infty$. Add the $x^k$ term to the polynomial means add a line with slope $k$. If the line is below the original graph, it would not change the function. As in the Figure~\ref{fig:addmissing} shows, we can move the new line up to touch a corner of the graph of the tropical polynomial.

\begin{figure}[h]
\begin{center}
\subfigure[The added line is lower then the graph, which is okay.]{\label{fig:addmissing-a}\includegraphics[scale=0.3]{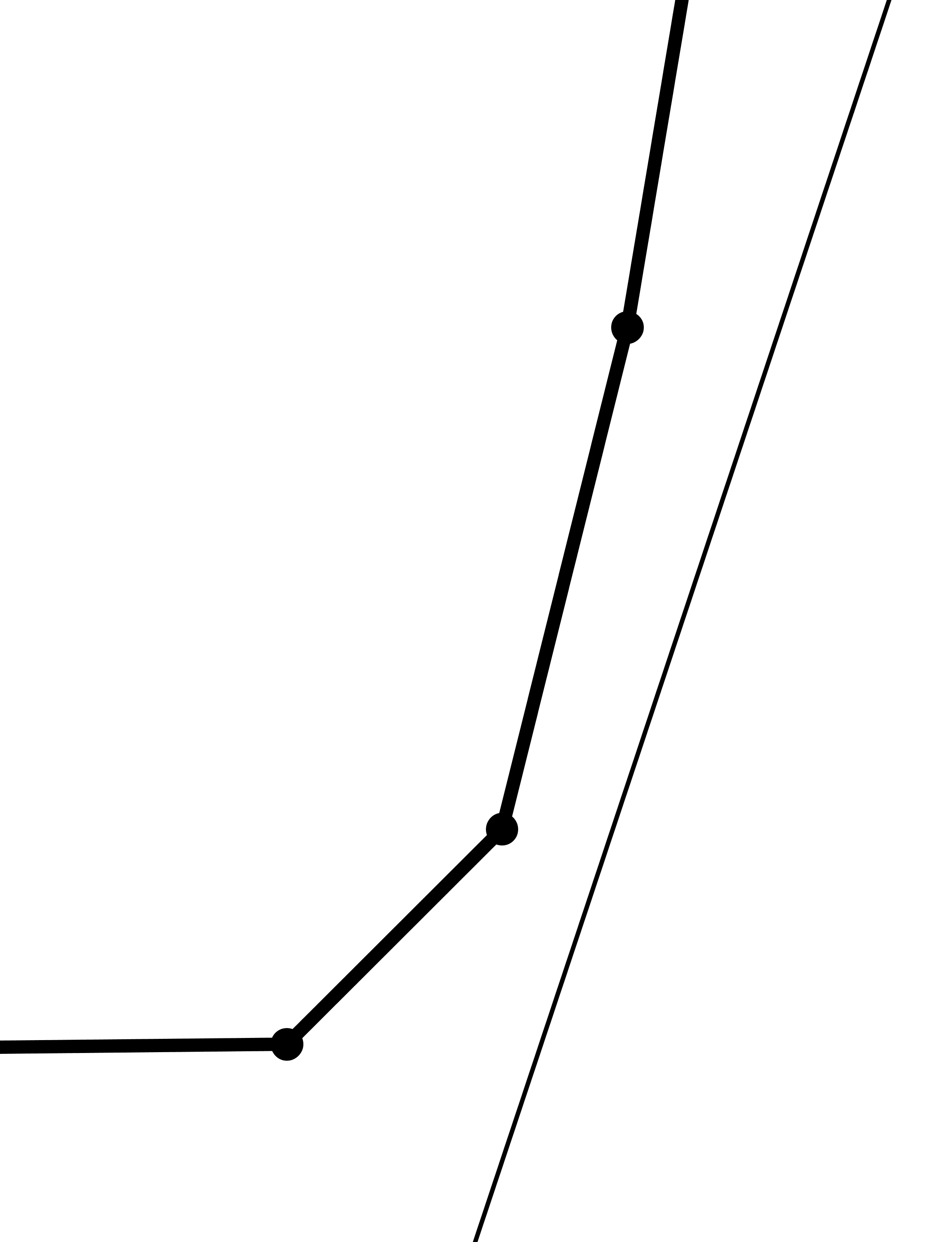}}
\subfigure[The added line up to the highest position.]{\label{fig:addmissing-b}\includegraphics[scale=0.3]{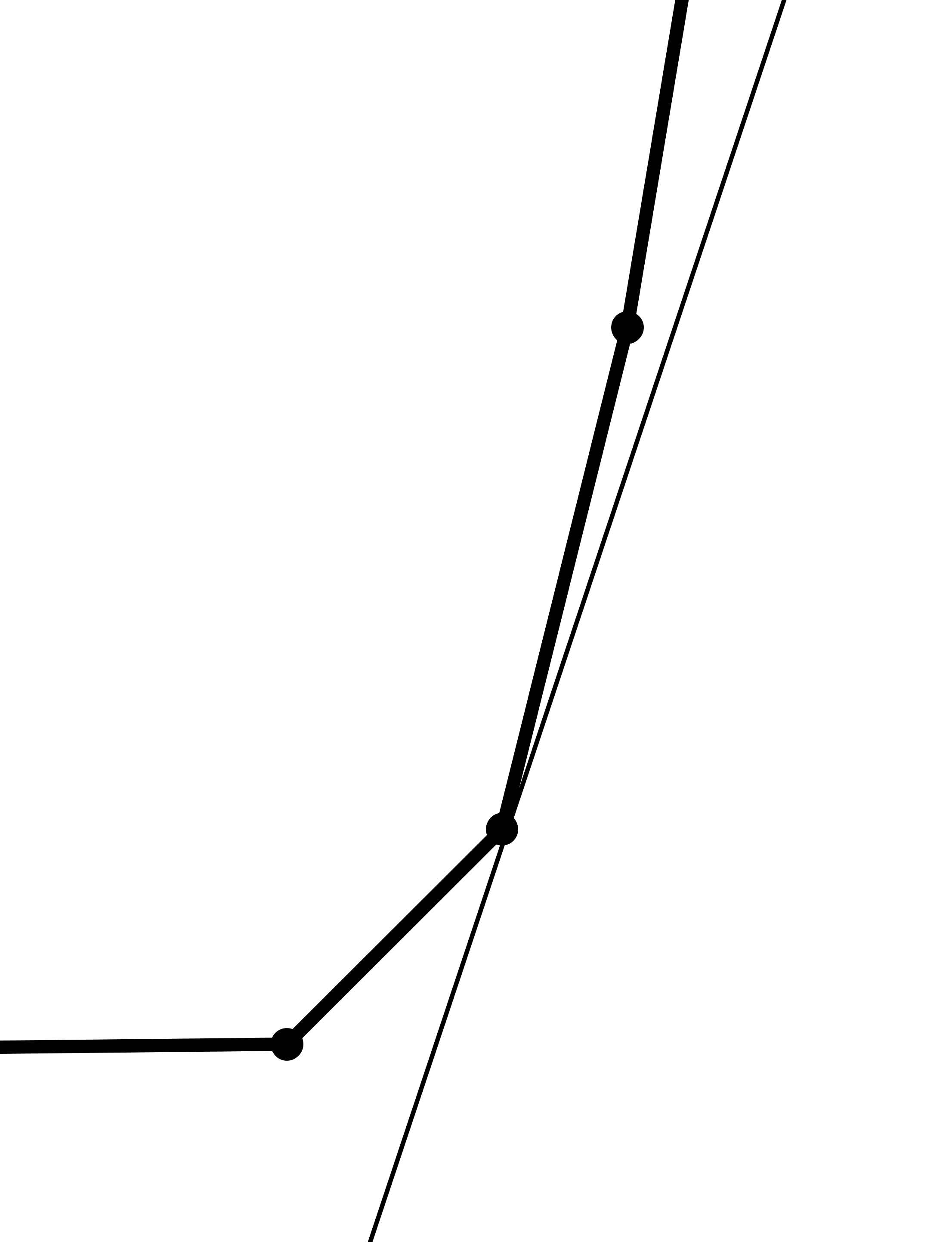}}
\subfigure[The added line is too high.]{\label{fig:addmissing-c}\includegraphics[scale=0.3]{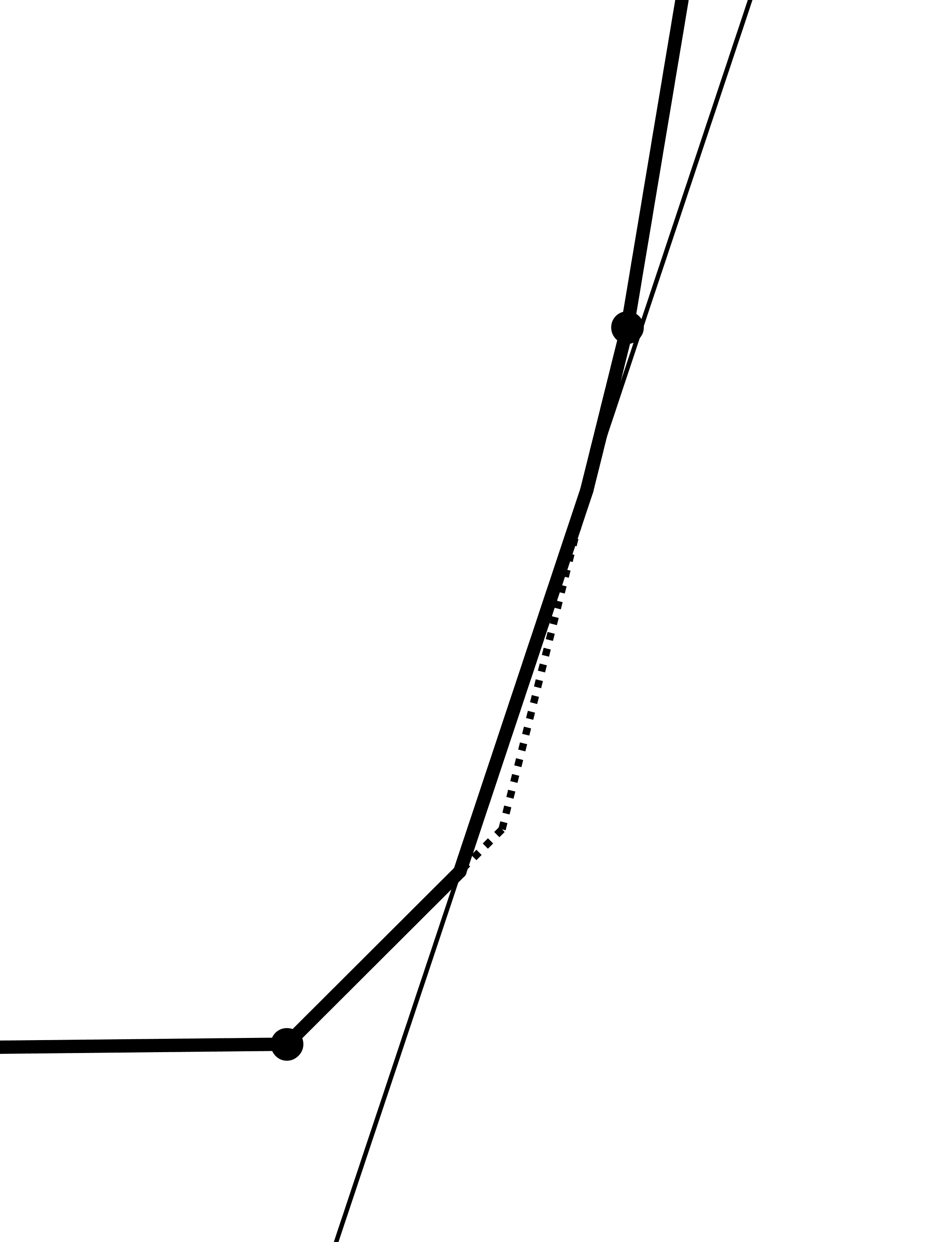}}
\caption{Adding a ``missing term'' to the graph of a tropical polynomial.}\label{fig:addmissing}
\end{center}
\end{figure}

As a result, with some real coefficients we can always add a missing $x^k$ term without change the original function. Therefore, the following theorem is obvious.

\begin{lem}\label{lem:nomissing}
Let $f(x) = a_n \. x^{\. n} \+ \cdots \+ a_r \. x^{\. r}$ be a tropical polynomial. Suppose that $a_j = -\infty$ for some $n < j < r$. Then there is $\alpha \in \R$ such that $g(x) = a_n \. x^{\. n} \+ \cdots \+ a_{j-1} \. x^{\.(j-1)} \+ \alpha \. x^j \+ a_{j+1} \. x^{j+1} \cdots \+ x^r$ and $f(x)$ define the same function..
\end{lem}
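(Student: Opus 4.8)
The plan is to interpret the insertion of the missing $x^j$ term geometrically. Since $a_j = \0 = -\infty$, adjoining the monomial $\alpha \. x^j$ produces
\[
g(x) = \max\{f(x),\ \alpha + jx\},
\]
so $g$ and $f$ define the same function precisely when the line $y = jx + \alpha$ never rises above the graph of $f$, i.e. when $\alpha + jx \le f(x)$ for every $x \in \R$. Rearranging, this holds if and only if $\alpha \le f(x) - jx$ for all $x$, that is, if and only if $\alpha \le \inf_{x \in \R}\{f(x) - jx\}$. Thus the whole lemma reduces to showing that this infimum is a \emph{finite} real number; any $\alpha$ at or below it will then work, and the natural choice is $\alpha = -\L_f(j)$, since by Definition~\ref{def:legendre} we have $\L_f(j) = \max_x\{jx - f(x)\} = -\inf_x\{f(x)-jx\}$.

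First I would establish that the continuous function $\phi(x) := f(x) - jx$ tends to $+\infty$ as $x \to \pm\infty$. By Lemma~\ref{lem:m}(a) there is an $M$ with $f(x) = a_n + nx$ for $x > M$, so $\phi(x) = a_n + (n-j)x$ there; because $j < n$ this forces $\phi(x) \to +\infty$ as $x \to +\infty$. Symmetrically, Lemma~\ref{lem:m}(b) gives $f(x) = a_r + rx$ for $x < m$, hence $\phi(x) = a_r + (r-j)x$, and since $j > r$ we get $\phi(x)\to +\infty$ as $x \to -\infty$. This is the step where the strict inequalities $r < j < n$ are essential: if $j$ coincided with the top or bottom exponent, $\phi$ would tend to a constant or to $-\infty$ at one end and no finite $\alpha$ could keep the line beneath the graph, which is exactly the situation ruled out by Lemma~\ref{lem:leading}.

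From the two limits, $\phi$ is a continuous coercive function (blowing up at both ends), so it attains a global minimum at some finite $x_0$, and $\inf_{x}\phi(x) = \phi(x_0) \in \R$. Setting $\alpha := \phi(x_0) = \min_x\{f(x)-jx\}$, we obtain $\alpha + jx \le f(x)$ for all $x$, whence $g(x) = \max\{f(x),\ \alpha + jx\} = f(x)$, which is the desired conclusion.

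I expect the main (and essentially only) obstacle to be the verification that the infimum is genuinely finite rather than $-\infty$; once the coercivity of $\phi$ at both ends is in hand, the remaining steps are immediate. As a sanity check one could confirm that with this choice of $\alpha$ the new line meets the graph at $x_0$, which is precisely the geometric picture of Figure~\ref{fig:addmissing-b}.
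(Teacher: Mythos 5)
Your proof is correct and takes essentially the same approach as the paper: the informal discussion preceding the lemma (adding the missing term means adding a line of slope $j$, which may be raised until it touches a corner of the graph, cf.\ Figure~\ref{fig:addmissing}) is precisely your reduction to the condition $\alpha \le \inf_{x}\{f(x)-jx\}$, and the paper likewise identifies the optimal coefficient as $-\L(j)$. The only difference is that the paper states the lemma without a formal proof, declaring it obvious after that sketch, while you supply the rigor it omits, namely the finiteness of the infimum via the coercivity of $f(x)-jx$ obtained from Lemma~\ref{lem:m} (which is exactly where the hypothesis $r<j<n$ is needed).
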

We can actually do better: find the largest possible coefficient for a missing $x^k$-term without change the original function. It turns out that the coefficient is exactly $-\L(k)$. Observe that the Legendre transform basically compare the line $y = kx$ and the graph of the tropical polynomial $f(x)$. If $y = kx$ goes to high, we need to take it down by $\L(k)$ to keep the graph the same as before. On the other hand, if $y = kx$ is too low, we need to move it up by $-\L(k)$. Therefore, the highest possible line with the slope $k$ without change the original function is $kx - \L(k)$, and it is exactly $-\L(k) \. x^{\. k}$
\begin{thm}
Let $f(x) = a_n \. x^n \+ \cdots \+ a_r \. x^r$ be a maximally represented polynomial if $g(x)$ is obtained from $f(x)$ by substitute any coefficient with a larger real number, then the graph of $g(x)$ and $f(x)$ are different. Moreover, any tropical polynomial has a unique maximally represented polynomial. 
\end{thm}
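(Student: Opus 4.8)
The plan is to prove the two assertions separately, letting the modified Legendre transform do the heavy lifting. The starting observation is that for a maximally represented polynomial the coefficient of $x^k$ has the closed form
\[
a_k = -\L_f(k) = -\max_{x \in \R}\{kx - f(x)\} = \min_{x \in \R}\{f(x) - kx\}.
\]
Since $f|_\R$ is convex and piecewise linear (by the convexity theorem proved above), the map $x \mapsto f(x) - kx$ is convex and piecewise linear, and by Lemma~\ref{lem:m} it tends to $+\infty$ as $x \to \pm\infty$ whenever $r < k < n$, while it is eventually constant (hence bounded below) when $k = r$ or $k = n$. In all cases the minimum is attained at some point $x^\ast$, and there $f(x^\ast) = kx^\ast + a_k$, so the $x^k$-term realizes the maximum defining $f(x^\ast)$.

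For the first assertion, suppose we replace $a_k$ by some $\alpha > a_k$ and call the new function $g$. Evaluating at the minimizer $x^\ast$ gives $g(x^\ast) \ge kx^\ast + \alpha > kx^\ast + a_k = f(x^\ast)$, so $g$ and $f$ already differ at $x^\ast$. (For the endpoint cases $k = n$ and $k = r$ this also follows directly from Lemma~\ref{lem:leading}.) Hence increasing any single coefficient of a maximally represented polynomial strictly raises the graph somewhere and therefore changes the function.

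For existence in the second assertion, I would start from an arbitrary tropical polynomial $f$, whose graph has leading slope $n$ and trailing slope $r$ by Lemma~\ref{lem:m}. Define $b_i = -\L_f(i)$ for every integer $i \in \{r,\ldots,n\}$; each $b_i$ is a genuine real number, so there are no missing terms, and set $g(x) = \max_{r \le i \le n}\{ix + b_i\}$. The inequality $g \le f$ is immediate since $ix - \L_f(i) = \min_y\{i(x-y) + f(y)\} \le f(x)$. For the reverse inequality I would use convexity: at any $x_0$ interior to a linear piece of $f$, the slope $i_0$ is an integer in $\{r,\ldots,n\}$, and the supporting-line inequality $f(x) \ge f(x_0) + i_0(x - x_0)$ forces $\L_f(i_0) = i_0 x_0 - f(x_0)$, whence $i_0 x_0 - \L_f(i_0) = f(x_0)$ and so $g(x_0) \ge f(x_0)$. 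By continuity $g = f$ on all of $\R$, so $g$ is equivalent to $f$. Because the Legendre transform depends only on the underlying function, $\L_g = \L_f$, and thus $b_i = -\L_g(i)$, i.e. $g$ is maximally represented.

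Uniqueness is then essentially formal: if $g_1$ and $g_2$ are maximally represented and both equivalent to $f$, they define the same function, hence share leading and trailing slopes (Lemma~\ref{lem:m}) and therefore the same degree range $\{r,\ldots,n\}$; since they induce identical Legendre transforms, their coefficients $-\L_{g_1}(i) = -\L_{g_2}(i)$ agree for every $i$, so $g_1 = g_2$ coefficientwise. The one step demanding real care is the reverse inequality $g \ge f$ in the existence argument—equivalently, that the biconjugate recovers $f$—for which the decisive input is that the slopes of $f$ are integers confined to $\{r,\ldots,n\}$, so that sampling the Legendre transform only at these integers already reconstructs the entire convex graph.
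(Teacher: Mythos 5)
Your proof is correct and takes essentially the same route as the paper, which derives this theorem from the modified Legendre transform: the paper's informal observation that $kx - \L_f(k)$ is the highest line of slope $k$ that does not alter the function is exactly what your attained-minimizer argument (for the first claim) and supporting-line/biconjugation argument (for existence) make rigorous. Note the paper actually states this theorem with no formal proof, so your write-up---in particular the step showing $g \geq f$ by sampling the transform only at the integer slopes $\{r,\ldots,n\}$, and the uniqueness argument via equality of Legendre transforms of equivalent polynomials---supplies precisely the details the paper leaves implicit.
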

From the discussion above, we can easily find out that two tropical polynomials have the same graph if and only if their maximally represented polynomials are the same.
\begin{cor}
 Two tropical polynomial $f(x)$ and $g(x)$ define the same function if and only if their maximally represented polynomials are the same.
\end{cor}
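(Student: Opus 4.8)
The plan is to read this off directly from the preceding theorem, which supplies both the existence and the uniqueness of a maximally represented polynomial equivalent to any given tropical polynomial. Write $\hat f$ and $\hat g$ for the maximally represented polynomials associated to $f$ and $g$. By construction each is equivalent to its source, so $f$ and $\hat f$ define the same function, and likewise $g$ and $\hat g$. The backward implication is then immediate: if $\hat f = \hat g$, then $f$ defines the same function as $\hat f = \hat g$, which in turn defines the same function as $g$, so $f$ and $g$ are equivalent by transitivity of equality of functions.

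For the forward implication I would argue concretely, since this also makes transparent why the statement is true. Suppose $f$ and $g$ define the same function. The coefficients of $\hat f$ are $-\L_f(i)$ with $\L_f(p) = \max_{x \in \R}\{p \cdot x - f(x)\}$, and this quantity depends only on the restriction $f|_\R$, not on any particular monomial expansion. Hence if $f$ and $g$ induce the same function on $\R$ we obtain $\L_f = \L_g$ wherever both are defined, so the corresponding coefficients coincide. The one point requiring care is that the two Legendre transforms share the same domain $\{r, \ldots, n\}$, i.e.\ that $f$ and $g$ have the same leading exponent $n$ and the same trailing exponent $r$; this follows from Lemma~\ref{lem:m}, because $n$ is the asymptotic slope of the common function as $x \to +\infty$ and $r$ its asymptotic slope as $x \to -\infty$, both of which are invariants of the function itself. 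With the domains matched, $\hat f$ and $\hat g$ have identical exponent ranges and identical coefficients, so $\hat f = \hat g$.

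I expect the main obstacle to be not the logical skeleton, which is a short transitivity argument, but precisely this matching of exponent ranges before the coefficients are compared: one must rule out that equivalent polynomials differ in their top or bottom degree, and Lemma~\ref{lem:m} removes this by recovering $n$ and $r$ from the two asymptotic slopes. As a shortcut one could instead invoke only the uniqueness clause of the theorem above, observing that $\hat f$ and $\hat g$ are then two maximally represented polynomials defining the same function and hence must coincide; but the Legendre-invariance argument is preferable here because it exhibits the common maximal representative explicitly and thereby explains the uniqueness rather than merely quoting it.
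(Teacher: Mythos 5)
Your proof is correct and follows essentially the same route as the paper, which states this corollary without proof as an immediate consequence of the existence/uniqueness theorem for maximal representations and the Legendre-transform discussion preceding it. Your forward direction---Legendre invariance of the coefficients together with matching the exponent range $\{r,\ldots,n\}$ via the asymptotic slopes of Lemma~\ref{lem:m}---is precisely the detail that the paper's ``we can easily find out'' glosses over.
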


The following Corollary is obvious, too.

\begin{cor}
Let $f(x) = a_n \. x^n \+ \cdots \+ a_r \. x^r$ be a tropical polynomial. Then $f(x)$ is maximally represented if and only if 
for each $r \leq i \leq n$, there exists some $x_0 \in \R$ such that $f(x_0) = a_i x_0^i$.
\end{cor}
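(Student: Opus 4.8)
The plan is to unwind the definition of maximally represented (Definition~\ref{def:maxpoly}) through the modified Legendre transform and reduce the biconditional to a single term-by-term statement about where each monomial attains the maximum. By Definition~\ref{def:maxpoly}, $f$ is maximally represented exactly when $a_i = -\L_f(i)$ for every $i \in \{r, \ldots, n\}$, and by Definition~\ref{def:legendre},
\[
-\L_f(i) = -\max_{x \in \R}\{ix - f(x)\} = \min_{x \in \R}\{f(x) - ix\}.
\]
So throughout I would work with the equivalent reformulation: $f$ is maximally represented if and only if $a_i = \min_{x \in \R}\{f(x) - ix\}$ for all $i$.

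First I would record an inequality that holds with no hypothesis on $f$. Since $f(x) = \max_{r \le j \le n}\{a_j + jx\}$ by definition of a tropical polynomial, we have $f(x) \ge a_i + ix$, hence $f(x) - ix \ge a_i$, for every $x \in \R$ and every index $i$. Minimizing over $x$ gives $-\L_f(i) \ge a_i$ always. Consequently the condition $a_i = -\L_f(i)$ is equivalent to the reverse inequality $a_i \ge -\L_f(i)$, that is, to the minimum of $f(x) - ix$ actually descending all the way to the value $a_i$.

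Next I would argue that this minimum is attained, and locate the point of attainment. For $i \in \{r, \ldots, n\}$ the modified Legendre transform is a genuine real number; moreover Lemma~\ref{lem:m} shows $f(x) = a_n + nx$ for $x$ large and $f(x) = a_r + rx$ for $x$ very negative, so the continuous piecewise-linear function $f(x) - ix$ tends to $+\infty$ at both ends for $r < i < n$ (and is eventually constant, equal to $a_i$, at the extreme indices $i = n, r$). In every case the minimum is achieved at some $x_0 \in \R$, so $-\L_f(i) = f(x_0) - ix_0$. Combining this with the previous paragraph, $a_i = -\L_f(i)$ holds if and only if there exists $x_0$ with $f(x_0) - ix_0 = a_i$, i.e. $f(x_0) = a_i \. x_0^{\. i} = a_i + i x_0$. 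Quantifying over all $i$ yields the corollary.

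The argument is essentially bookkeeping, so I expect no deep obstacle; the only point requiring care is the forward direction, where from $a_i = -\L_f(i)$ I must produce an actual $x_0$ attaining the minimum rather than a sequence merely approaching it. This is exactly where the well-definedness of the modified Legendre transform on $\{r, \ldots, n\}$ and the coercivity of $f(x) - ix$ supplied by Lemma~\ref{lem:m} are used. The reverse direction is immediate from the universal inequality $f(x) - ix \ge a_i$: exhibiting a single $x_0$ with $f(x_0) - ix_0 = a_i$ forces the minimum to equal $a_i$, hence $-\L_f(i) = a_i$.
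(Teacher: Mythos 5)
Your proof is correct and follows essentially the route the paper intends: the paper states this corollary without proof, as an immediate consequence of its preceding discussion that $-\L_f(k)$ is the largest coefficient for which the line of slope $k$ stays on or below the graph of $f$, i.e., just touches it. Your reformulation $-\L_f(i) = \min_{x \in \R}\{f(x) - ix\}$, the universal inequality $f(x) - ix \ge a_i$, and the attainment of the minimum via Lemma~\ref{lem:m} are exactly a rigorous rendering of that geometric argument.
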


Let $f(x) = a_n \. x^n \+ a_{n-1} \. x^{\. (n-1)} \+ \cdots \+ a_r \. x^r$ be a tropical polynomial. Suppose we want to find the maximally represented coefficient for $a_k$, where $r < k < n$. There are two cases to consider.

\begin{enumerate}[(i)]
\item A piece of the line $a_k + k x$ appears in the graph of $f(x)$. In this case, $a_k$ is maximally represented already.
\item The line $a_k + kx$ is under the graph of $f(x)$. In this case, we can move the line $a_k + kx$ up to intersect the graph of $f(x)$ at exactly one point. This point will be the intersection of two lines defined by two other terms $a_i \. x^{\. i}$ and $a_j \. x^{\. j}$ appear in the polynomial. Observe that one of these two lines must with slope less than $k$ and the other with slope greater than $k$. Suppose $i < k < j$, the intersection of $a_i + ix$ and $a_j + jx$ is $(a_i - a_j) / (j-i)$. If the line $\alpha + kx$ just touch the intersection, then we should have
\[
\frac{\alpha - a_j}{j - k} = \frac{a_i - a_j}{j-i}.
\]
Solve for $\alpha$, we get
\[
\alpha = \frac{(a_i - a_j) (j-k)}{j-i} + a_j.
\]
The maximum possible $\alpha$ is the maximally represented coefficient for $a_k$.
\end{enumerate}

\section{Tropical Fundamental Theorem of Algebra}
In previous section, we prove that each tropical polynomial $f(x)$ has a unique maximally represented tropical polynomial $g(x)$. Once we get the maximally represented polynomial of $f(x)$, it is easy to find all roots of $f(x)$. We elaborate the idea here. Let
\[
f(x) = a_n \. x^{\. n} \+ a_{n-1} \. x^{\. (n -1)} \+ \cdots \+ a_r \. x^r
\]
be a maximally represented tropical polynomial. As discussed in previous section, each line $a_k + kx$ appears in certain corner of the graph of $f(x)$. It turns out that there is $x_0$ in $\R$ such that
\[
a_k + k x_0 = a_{k-1} + (k-1)x_0
\]
for all $k = r+1, r+2, \ldots, n$, and $f(x_0) =  a_k + k x_0$. Therefore, $x_0 = a_{k-1} - a_k$ is a zero of the polynomial $f(x)$. Let
\[
d_k = a_{k-1} - a_k
\]
for $k = r+1, r+2, \ldots, n$. Then $d_{r+1}, d_{r+2}, \ldots, d_n$ (not necessary distinct) are $n-r$l roots of $f(x)$.

Note that
\[
f(x) = a_n \. x^{\. r} \.  [x^{\. (n-r)} \+ (a_{n-1} - a_n) \. x^{\. (n-r-1)}\+ \cdots \+ (a_r - a_n)],
\]
so $f(x)$ has a zero at $-\infty$ with multiplicity $r$. Hence, we have exactly $n$ roots (counting multiplicities) for the polynomial $f(x)$.

It is reasonable to have the following theorem, which we call the Tropical Fundamental Theorem of Algebra.


\begin{thm}[Tropical Fundamental Theorem of Algebra]\label{thm:tfta}
Let
\[
f(x) = a_n \. x^{\. n} \+ a_{n-1} \. x^{\. (n -1)} \+ \cdots \+ a_r \. x^r
\]
be a maximally represented polynomial. Then $f(x)$ can be factored into
\[
f(x) = a_n \. x^{\. r} \.  (x \+ d_{r+1}) \. (x \+ d_{r+2}) \. \cdots \. (x \+ d_n),
\]
where $d_k = a_{k-1} - a_{k}$ for all $k = r+1, r+2, \ldots, n$. 
\end{thm}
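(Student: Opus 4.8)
The plan is to verify directly that the two sides define the same function on $\R$, since an equality of factored tropical polynomials is just an identity of the functions they represent. Writing out the right-hand side with $a \+ b = \max\{a,b\}$ and $a \. b = a+b$, and recalling $x^{\. k} = kx$, the proposed factorization evaluates to
\[
g(x) = a_n + rx + \sum_{k=r+1}^{n} \max\{x, d_k\},
\]
with $d_k = a_{k-1} - a_k$. The goal is to show $g(x) = f(x) = \max_{r \le i \le n}\{a_i + ix\}$ for every $x \in \R$, and then to read off the behavior at $-\infty$.

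The key preliminary fact is that $d_{r+1}, \dots, d_n$ are nondecreasing. Because $f$ is maximally represented, every term $a_i \. x^{\. i}$ actually appears on the (convex) graph of $f$; the slope-$(k-1)$ and slope-$k$ pieces meet precisely at $x = a_{k-1} - a_k = d_k$, and convexity forces these breakpoints to occur in increasing order, so $d_{r+1} \le d_{r+2} \le \cdots \le d_n$. Equivalently, this is the concavity of the coefficient sequence $a_i = -\L(i)$ coming from the Legendre transform: since $\L$ is a supremum of affine functions in its argument it is convex, whence $-\L$ is concave and $d_{k+1} - d_k = 2a_k - a_{k-1} - a_{k+1} \ge 0$.

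With the ordering in hand, I would partition $\R$ by the breakpoints and evaluate $g$ on each interval using a telescoping sum. For $x \le d_{r+1}$ every $\max\{x, d_k\}$ equals $d_k$, and the telescoping collapse $\sum_{k=r+1}^{n} (a_{k-1}-a_k) = a_r - a_n$ reduces $g(x)$ to $a_r + rx$, matching the left-most linear piece of $f$ given by Lemma~\ref{lem:m}(b). On a general interval $d_j \le x \le d_{j+1}$ (with the convention $d_{n+1} = +\infty$), the first $j-r$ maxima equal $x$ while the remaining ones equal their $d_k$, and the surviving telescoping tail is $a_j - a_n$, so $g(x) = a_j + jx$, which is exactly the slope-$j$ piece of the convex function $f$. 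Hence $g = f$ on all of $\R$; moreover the left-most slope of $g$ is $r$, so by the definition of a root at $-\infty$ the factor $a_n \. x^{\. r}$ correctly records the zero of multiplicity $r$ there.

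The main obstacle is the ordering step: one must justify carefully that maximal representation is precisely what guarantees $d_{r+1} \le \cdots \le d_n$, because the interval bookkeeping in the telescoping step silently assumes the breakpoints occur in this order. Without maximal representation some $d_k$ could be out of order, and the naive tropical product would then introduce spurious corners not present in $f$. Once the monotonicity is pinned down, the remaining verification is the routine interval-by-interval telescoping sketched above.
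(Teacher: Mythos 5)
Your proof is correct, but it takes a genuinely different route from the paper's. The paper's entire proof is the single sentence ``Just expand the polynomial and we can get the conclusion of the theorem,'' leaning on the discussion preceding the theorem, where the roots of a maximally represented polynomial are identified with the corners $d_k = a_{k-1}-a_k$. That expansion argument, made explicit, would compute the tropical coefficient of $x^{\.(n-m)}$ in $a_n \. x^{\. r}\.(x \+ d_{r+1})\.\cdots\.(x \+ d_n)$ as the maximum over all $m$-element subsets $S \subset \{r+1,\dots,n\}$ of $\sum_{k\in S} d_k$, and then needs precisely the monotonicity $d_{r+1}\le\cdots\le d_n$ to see that this maximum is the telescoping sum $d_n + \cdots + d_{n-m+1} = a_{n-m}-a_n$, recovering the coefficient $a_{n-m}$. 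You instead verify the identity at the level of functions: you prove the monotonicity of the $d_k$ explicitly (via concavity of the sequence $a_i = -\L(i)$, a clean and correct argument, since $\L$ is a maximum of affine functions of $p$ and hence convex), and then check interval by interval that both sides equal $a_j + jx$ on $[d_j, d_{j+1}]$. What your approach buys is exactly the rigor the paper omits: the ordering of the breakpoints, which you correctly identify as the crux, is stated and proved rather than assumed, and the pointwise verification avoids symbolic expansion of tropical products altogether; what the paper's approach buys is brevity. One small point to tighten: when you assert that $a_j + jx$ is ``exactly the slope-$j$ piece of the convex function $f$'' on $[d_j, d_{j+1}]$, add the one-line check that $a_i + ix \le a_j + jx$ there for all $i \neq j$ (for $i<j$, $a_i - a_j = \sum_{k=i+1}^{j} d_k \le (j-i)\,d_j \le (j-i)\,x$; symmetrically for $i>j$ using $d_{j+1} \ge x$); alternatively, note that your $g$ is convex and piecewise linear with linear pieces exactly $\{a_j + jx\}_{r \le j \le n}$, hence equals their pointwise maximum, which is $f$ by definition.
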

\begin{proof}
Just expand the polynomial and we can get the conclusion of the theorem.
\end{proof}

The Tropical Fundamental Theorem of Algebra is stated in several places, for example~\cite{ss09}, and a proof (different from this paper) is given in~\cite{gm07}.

\section{Tropical Polynomials in Compact Forms}\label{s:compact}
We have seen that each tropical polynomial is equivalent to a unique maximally represented polynomial. In this section, we try to do something in opposite direction. We would like to find the most ``compact'' form of a given tropical polynomial. In Example~\ref{eg:differentforms}, we have seen that the coefficient $x$ term in the polynomial $(-1)\. x^{\. 2} \+ x\+ 1$ can be changed from $0$ to $-1$. We calm that if we can use a smaller coefficient for a specific term, we can actually drop that term (that is, set the coefficient to $-\infty$) without change the graph of the polynomial. For our case, $f(x) = (-1)\. x^{\. 2} \+ x\+ 1$ is equivalent to $(-1) \. x^{\.2} \+ 1$.

\begin{thm}\label{thm:drop}
Let $f(x) = a_n \. x^{\.n} \+ a_{n-1} \. x^{n-1} \+ \cdots \+ a_r \. x^r$ be a tropical polynomial. If there exist $b_k < a_k$ such that $g(x) = a_n \. x^{\.n} + a_{n-1} \. x^{n-1} \+ \cdots \+ a_{k-1} \. x^{\. (k-1)} \+ b_k \. x^{\. k} \+ a_{k+1} \. x^{\. k} \+ \cdots \+ a_r \. x^r$ and $f(x)$ define the same function, then we can drop the $x^{\.k}$ term completely. That is, set $b_k = -\infty$.
\end{thm}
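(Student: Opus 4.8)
The plan is to prove the statement directly, by showing that deleting the $x^{\.k}$ term \emph{already} produces the same function as $f$, with the hypothesis (that $a_k$ can be strictly lowered) doing all the work. First I would name the function we actually want to reach: let
\[
h(x) = \max_{r \le i \le n,\ i \ne k} \{ a_i + ix \},
\]
which is precisely the function defined by dropping the $x^{\.k}$ term. Writing $\ell(x) = a_k + kx$ and $\ell'(x) = b_k + kx$ for the two candidate $k$-th lines, the definition of the tropical operations gives $f(x) = \max\{\ell(x), h(x)\}$ and $g(x) = \max\{\ell'(x), h(x)\}$ at every $x \in \R$. The goal is then exactly $f \equiv h$.

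The key observation is that $b_k < a_k$ forces $\ell'(x) < \ell(x)$ for \emph{all} $x$, since $\ell(x) - \ell'(x) = a_k - b_k$ is a positive constant independent of $x$. I would then argue by contradiction. Suppose $f(x_0) > h(x_0)$ for some $x_0 \in \R$. Because $f(x_0) = \max\{\ell(x_0), h(x_0)\}$ exceeds $h(x_0)$, the maximum must be realized by the $k$-th line, so $f(x_0) = \ell(x_0) > h(x_0)$. But then both terms defining $g(x_0)$ are strictly below $f(x_0)$: indeed $\ell'(x_0) < \ell(x_0) = f(x_0)$ and $h(x_0) < f(x_0)$, so $g(x_0) = \max\{\ell'(x_0), h(x_0)\} < f(x_0)$. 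This contradicts the hypothesis that $f$ and $g$ define the same function.

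Hence $f(x) = h(x)$ for every real $x$, which says exactly that $f$ is unchanged when the $x^{\.k}$ term is deleted, i.e. when its coefficient is set to $-\infty$. Since the hypothesis can only hold for an interior index (Lemma~\ref{lem:leading} rules out lowering $a_n$ or $a_r$, as those dominate $f$ for large $|x|$ by Lemma~\ref{lem:m}), we have $r < k < n$, so the $r$-term and the $n$-term survive in $h$ and govern its behaviour near $\pm\infty$. Thus $h$ is still a tropical meromorphic function on $\T$ agreeing with $f$, and the conclusion holds on all of $\T$, not merely on $\R$.

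The argument is short, and I expect no genuine obstacle; the only point requiring care is the bookkeeping of the $\max$ structure—specifically the inference that $f(x_0) > h(x_0)$ makes $\ell$ the \emph{strict, sole} maximizer at $x_0$, which is where the strict inequality $b_k < a_k$ (as opposed to $b_k \le a_k$) is essential. In particular, no appeal to the Legendre transform or to maximal representability is needed, since the claim is a pointwise consequence of $\ell' < \ell$.
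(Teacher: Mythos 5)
Your proof is correct and takes essentially the same route as the paper's: both rest on the pointwise strict inequality $b_k + kx < a_k + kx \le f(x)$, which (since $f$ and $g$ agree) forces the line $b_k + kx$ never to realize the maximum defining $g$, so the term can be deleted outright. The paper states this directly while you organize it as a contradiction, but the content is identical.
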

\begin{proof}
Since
\[
f(x) \geq a_k + kx > b_k + kx,
\]
but $f(x)$ and $g(x)$ define the same function, so $b_k +kx$ is never been the maximum of the linear terms $a_i + ix$ for all $n \geq i \geq r$. Hence, we can complete drop the $x^k$ term without changing the graph of $f(x)$.
\end{proof}

We call a tropical polynomial is in its compact form if we drop all ``unnecessary terms.''

\begin{defn}\label{defn:compact}
Let $f(x) = a_n \. x^{\. n} \+ a_{n-1} \. x^{\. (n-1)} \+ \cdots \+ a_r \. x^r$ be a tropical polynomial, where $a_n, a_r \in \R$ and $a_i \in \T$ for all $r < i < n$. We say that $f(x)$ is in the compact form if for all $g(x)$ obtained from $f(x)$ by substitute a coefficient $a_i$ with $\alpha$, $r < i <n$, such that $\alpha < a_i$, then $g(x)$ and $f(x)$ define different functions.
\end{defn}

A tropical polynomial in its compact form is easy to calculate the roots, as the following Corollary shows.

\begin{cor}
Let
\[
f(x) = a_{n_1}\. x^{\. n_1} \+ a_{n_2} \. x^{\. n_2} \+ \cdots \+ a_{n_r}\. x^{\. n_r}
\]
be a tropical polynomial in its compact form, where $n_1 < n_2 < \cdots < n_r$ are all positive integers, and $a_{n_1}, a_{n_2}, \ldots, a_{n_r}$ in $\R$. Then $(a_{n_k} - a_{n_{k-1}})/(n_{k-1} - n_{k})$ is a root of $f(x)$ with multiplicity $n_{k-1} - n_{k}$, for all $k = 2, 3, \ldots, r$.
\end{cor}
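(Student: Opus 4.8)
The plan is to read off the roots directly from the graph of $f$, using the fact that in compact form every one of the $r$ monomials actually contributes a linear segment to the graph. Recall that $f|_\R$ is a convex piecewise linear function, given by $f(x) = \max_{1 \le k \le r}\{a_{n_k} + n_k x\}$. By Definition~\ref{defn:mult} a point is a root precisely when $\w_f > 0$ there, i.e.\ at an upward corner of this graph, and its multiplicity is the jump in slope. So the whole statement reduces to (i) identifying which lines $\ell_k(x) := a_{n_k} + n_k x$ are exposed on the graph, and (ii) computing the corner between consecutive exposed lines.

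First I would show that all $r$ lines are exposed, meaning each $\ell_k$ is the strict maximum on some open interval. For the two extreme lines this is immediate from Lemma~\ref{lem:m}: $\ell_1$ is the graph for $x < m$ and $\ell_r$ is the graph for $x > M$. For an interior line $\ell_k$ (with $1 < k < r$), suppose it were never the strict maximum, so $\ell_k(x) \le \max_{j \ne k}\ell_j(x)$ for all $x$. Then lowering $a_{n_k}$ by any $\varepsilon > 0$ would leave $\ell_k$ strictly below the remaining lines everywhere, hence would not change the function; by Theorem~\ref{thm:drop} the $x^{\.n_k}$ term could even be dropped entirely. This contradicts the hypothesis that $f$ is in compact form (Definition~\ref{defn:compact}). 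Thus every interior line is exposed on an open interval, and all $r$ monomials contribute genuine segments.

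Next, convexity forces the exposed segments to appear from left to right in order of increasing slope. Since the slopes are $n_1 < n_2 < \cdots < n_r$ and there are exactly $r$ exposed segments, the graph consists of the pieces $\ell_1, \ell_2, \ldots, \ell_r$ in that order, joined at $r-1$ corners. The $k$-th corner (for $k = 2, \ldots, r$) is where $\ell_{k-1}$ meets $\ell_k$; solving $a_{n_{k-1}} + n_{k-1} x = a_{n_k} + n_k x$ gives $x = (a_{n_{k-1}} - a_{n_k})/(n_k - n_{k-1})$, which is exactly the stated value $(a_{n_k} - a_{n_{k-1}})/(n_{k-1} - n_k)$. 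At this corner the slope jumps from $n_{k-1}$ to $n_k$, so $\w_f = n_k - n_{k-1} > 0$; by Definition~\ref{defn:mult} this point is a root of multiplicity $n_k - n_{k-1}$. (The multiplicity is the positive slope jump $n_k - n_{k-1}$; the sign written in the statement should be read in this sense.)

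The main obstacle is the exposedness argument of the second paragraph, and in particular ruling out the degenerate case where an interior line is tangent to the graph, touching it at a single point without ever strictly exceeding the others. The clean way around it is the reduction just described: a line that is never the strict maximum can have its coefficient lowered without changing the function, hence can be dropped, which compact form forbids. Once exposedness is secured, the remaining steps are elementary, namely ordering the segments by convexity and a one-line computation of each intersection point and its slope jump.
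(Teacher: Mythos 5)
Your proposal is correct, and its computational core is the same as the paper's: intersect the consecutive lines $a_{n_{k-1}} + n_{k-1}x$ and $a_{n_k} + n_k x$, solve for $x$, and read the multiplicity off the jump in slope. The difference is that the paper's proof consists of \emph{only} that computation; it implicitly assumes that each of the $r$ lines actually appears as a segment of the graph, so that consecutive lines meet at genuine corners of $f$. That assumption is exactly where the compact-form hypothesis must enter --- the paper never invokes Definition~\ref{defn:compact} at all, and without it the conclusion fails, since the intersection of two consecutive lines can lie strictly below the graph and then contributes no root. Your second paragraph supplies precisely this missing step: the extreme lines are exposed by Lemma~\ref{lem:m}, and an interior line that is never the strict maximum could have its coefficient lowered by any $\varepsilon>0$ (and, via Theorem~\ref{thm:drop}, dropped entirely) without changing the function, contradicting Definition~\ref{defn:compact}; finiteness of the collection of lines then upgrades ``strict maximum at a point'' to ``strict maximum on an open interval,'' and convexity orders the exposed segments by slope so that $\ell_{k-1}$ and $\ell_k$ are adjacent (no other line has slope strictly between $n_{k-1}$ and $n_k$). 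So your write-up is a rigorous completion of the paper's sketch rather than a different method. Your parenthetical remark on the sign is also well taken: the multiplicity is the positive jump $n_k - n_{k-1}$, as in the paper's own proof, and the expression $n_{k-1} - n_k$ in the statement of the corollary is a typo.
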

\begin{proof}
For each $k=2, 3, \ldots, r$, $a_{n_{k-1}} + n_{k-1} x = a_k + n_k x$. Solve for $x$ we get $x = (a_{n_k} - a_{n_{k-1}})/(n_{k-1} - n_{k})$. The multiplicity is the change of the slope which equals to $n_k - n_{k-1}$.
\end{proof}

\section{Tropicalization of a Classical Polynomial}\label{s:tropicalization}
We will detour a bit to study two variable polynomials here for two reasons. First, we will see which part of this paper can be easily extended to multivariable cases. Second, the geometric pictures are more clear in two variable cases. Our purpose is just to give some motivations of studying tropical polynomials and more general, tropical meromorphic functions. Therefore, we will skip some details and proofs, please refer to~\cite{gathmann06, ims09, rst05} for more details.

\begin{defn}
For a tropical polynomial of two variables
\[
f(x,y) = \sum_{(i,j) \in I}^\+ a_{i,j} \. x^{\.i} \.  y^{\.j},
\]
where $a_{i,j} \in \R$ and $I$ is a finite subset of $\N^2$. Evaluate the tropical polynomial, we get
\[
f(x,y) = \max \{ a_{i,j} + i \cdot x + j \cdot y \}.
\]
We define the zero locus to be the points in $\R^2$ such that the maximum of the leaner forms is attained at least twice. The zero locus of the tropical polynomial will be denoted by $\TT(f)$, and we call it a tropical curve defined by $f(x,y)$.  
\end{defn}
Similarly, we can define tropical hypersurfaces in higher dimensional cases. We give an example of a tropical curve defined by a linear function which we call a tropical line.

\begin{eg}\label{eg:tropicalline}
Let $f(x,y) = x \+ y \+ 0$. The zero locus of $f(x,y)$ is shown in Figure~\ref{fig:tropicalline}, which we call a tropical line.
\end{eg}

\begin{figure}[h]
\begin{center}
\includegraphics[scale=0.7]{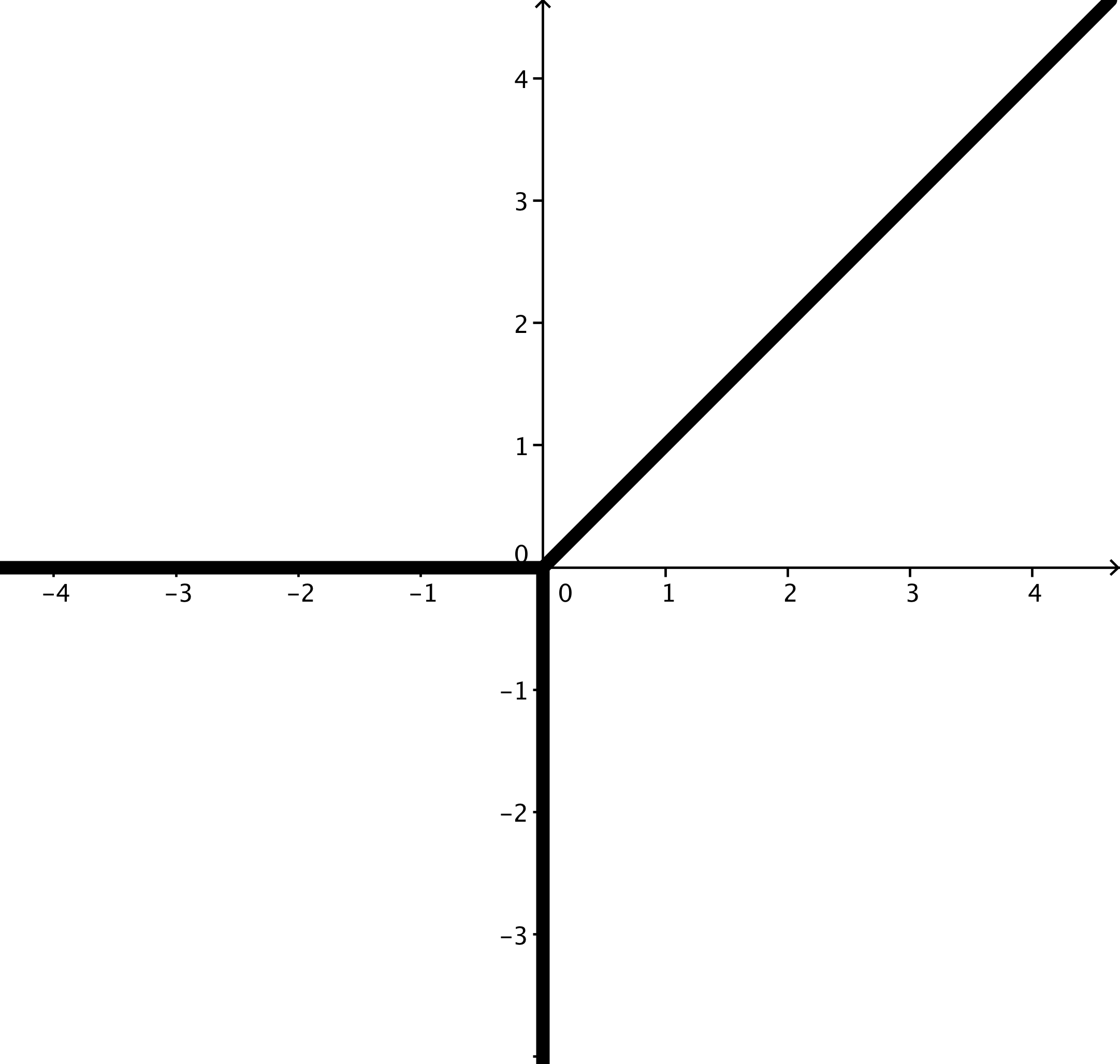}
\caption{The tropical line defined by $f(x,y) = x \+ y \+ 0$. }\label{fig:tropicalline}
\end{center}
\end{figure}

There are at least two ways to see how tropical curves (or hypersurfaces) comes from the classical curves. To explain this, suppose $V \subset \C^2$ be an algebraic variety defined by the locus of single complex polynomial $g(x,y)$. Consider the map
\[
\phi_t: (\C\setminus \{ 0 \})^2 \to \R^2
\]
defined by
\[
\phi_t(x,y) = (-\log_t(|x|), -\log_t(|y|)),
\]
where $t$ is a positive real number. Then we can ``see'' the complex curve $C$ in $\R^2$, namely, $\widetilde{V} = \phi((\C\setminus \{ 0 \})^2 \cap C)$. When we take $t$ approaches to zero, $\widetilde{V}$ will be a tropical curve. 

\begin{eg}
Let $C$ be a complex line defined by the polynomial $f(x,y) = x + y +1$. Figure~\ref{fig:amoeba} shows that as $t$ approaches to zero, the image $\phi_t(C)$ tends to the tropical line defined in Example~\ref{eg:tropicalline}.
\end{eg}

\begin{figure}[htbp]
\begin{center}
\subfigure[$\phi_t(C), t = 0.4.$]{\includegraphics[scale=0.4]{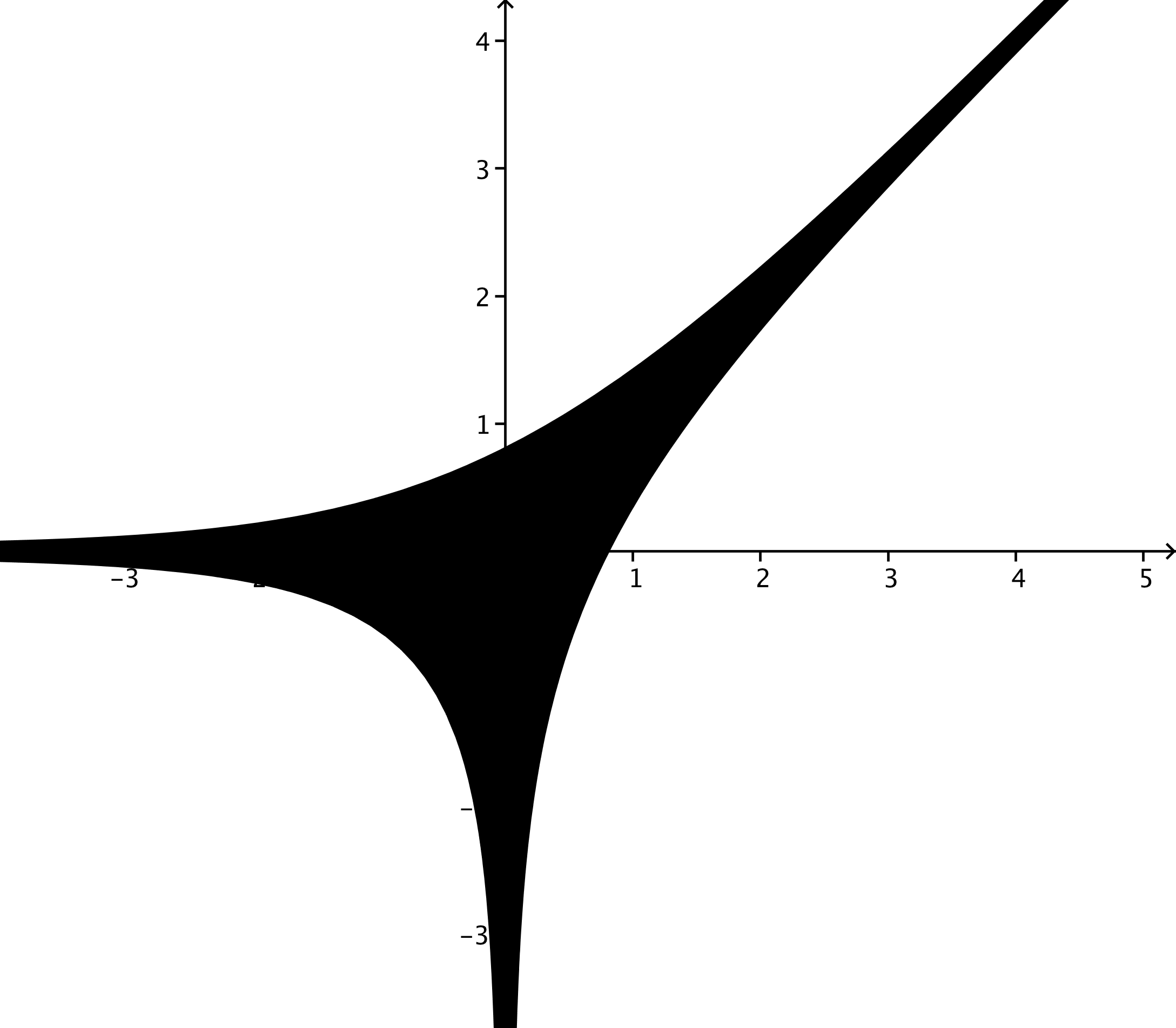}}
\subfigure[$\phi_t(C), t = 0.1.$]{\includegraphics[scale=0.4]{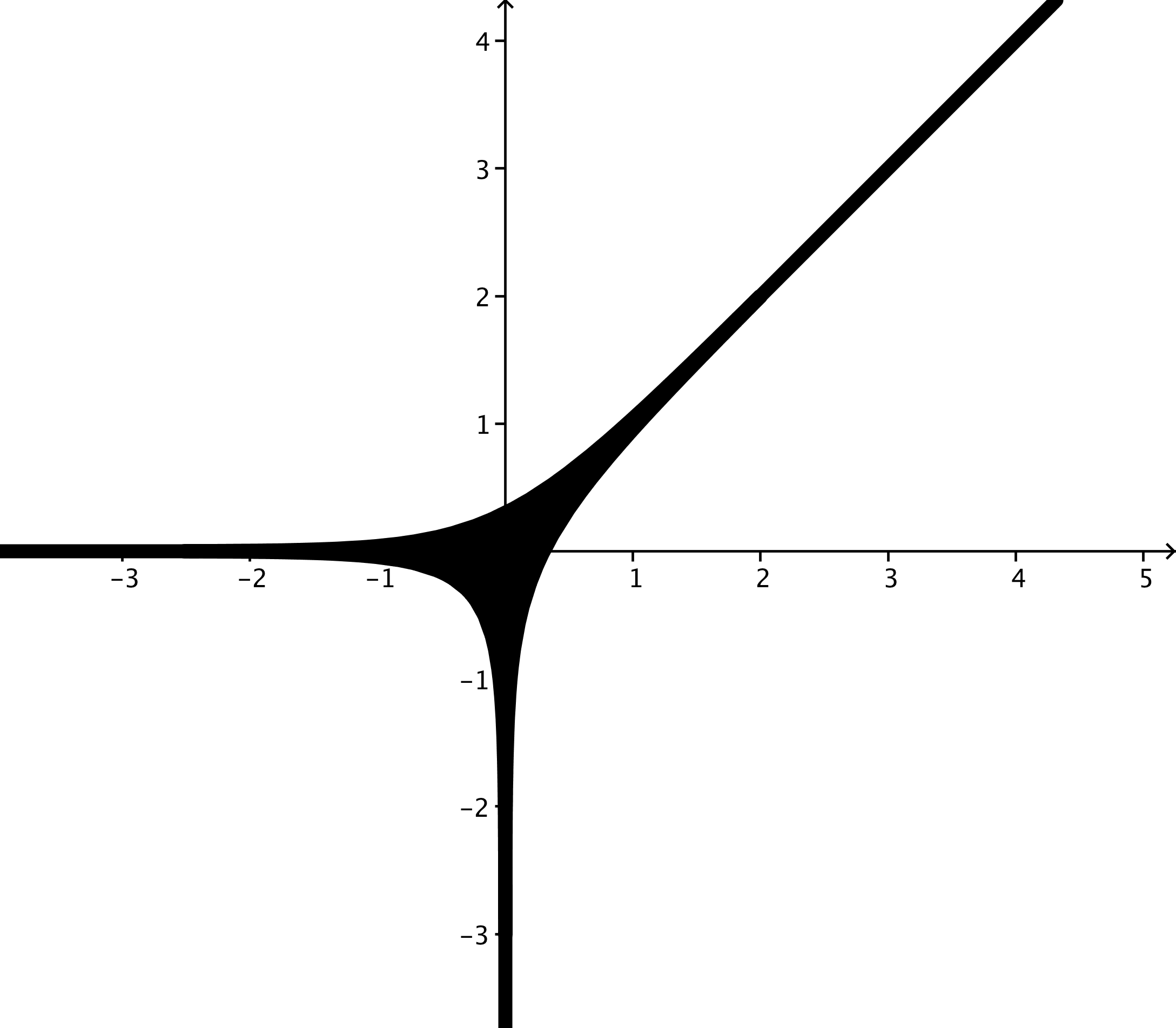}} 
\subfigure[$\lim_{t \to 0} \phi_t(C)$, the tropical line $x + y +0$.]{\includegraphics[scale=0.4]{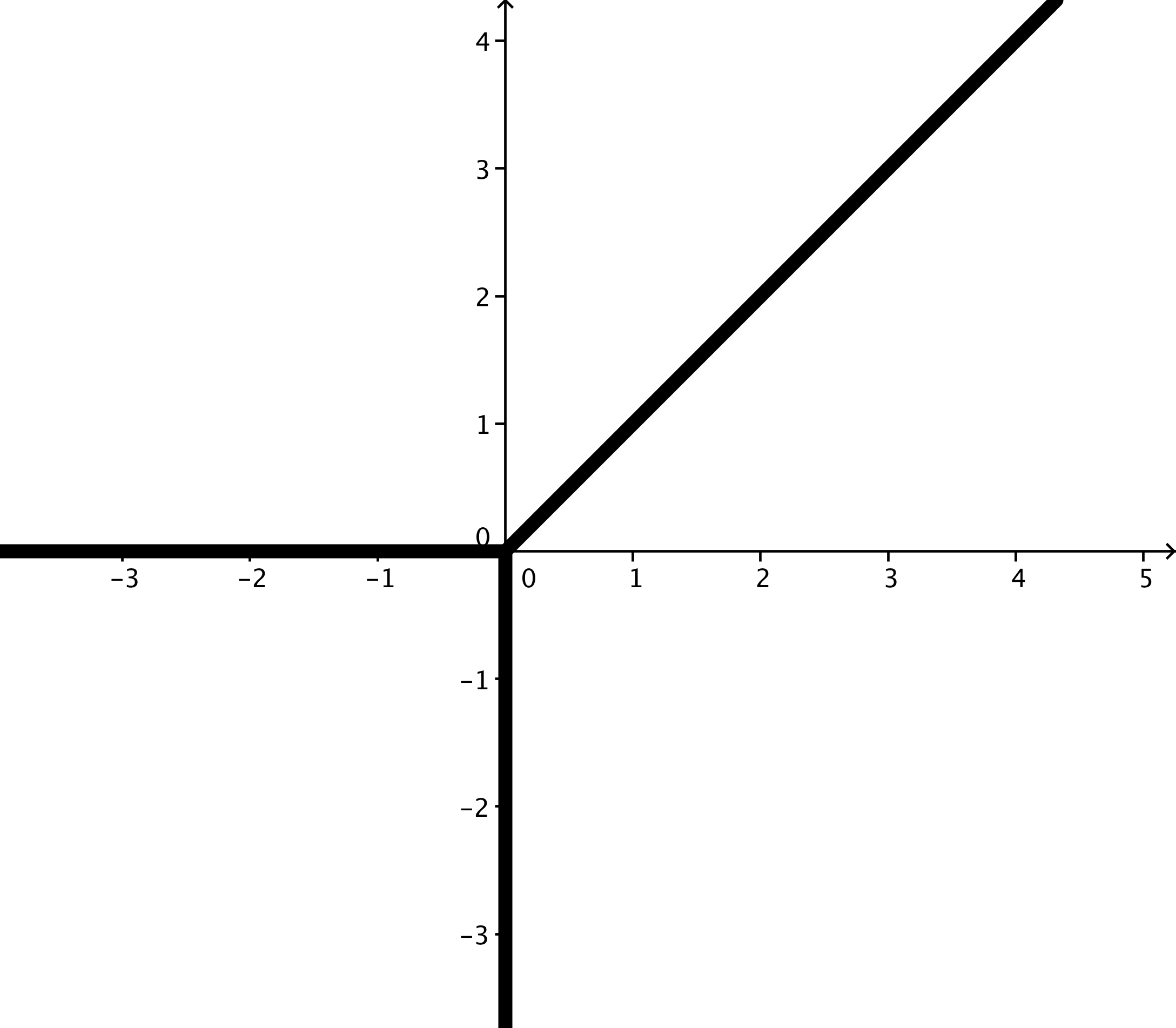}}
\caption{Amoeba corresponding to $f(x,y) = x + y +1$.}\label{fig:amoeba}
\end{center}
\end{figure}

We can do it without the ambiguous taking limit part. Let 
\[
K = \cup_{n=1}^{\infty} ((t^{1/n}))
\]
be the field of Puiseux series. It is well known that $K$ is an algebraically closed field, so we can do classical algebraic geometry with this field $K$. An element $a \in K$ is a power series of $t$, and the exponents can be rational, but bounded below, and have the same denominator. 

We define a valuation $\val$ on $K$ by set $\val(a)$ to be the minimum exponent appears in $a \in K$, and $\val(0) = -\infty$. Define a map
\[
\Val \colon K^2 \to \T^2
\]
by 
\[
\Val(x,y) = (-\val(x), -\val(y)).
\]

Let 
\[
f(x,y) = \sum_{(i,j) \in I} a_{i,j} x^i y^j
\]
be a polynomial in $K[x,y]$. Let $C$ be the curve define by $f(x,y)$. We denote the image of $C$ under the map $\Val$ by $\AA(C)$ and call it a non-archimedean amoeba corresponding to the curve $C$. 

The tropical polynomial corresponding to $f(x,y)$ is defined by
\[
\sum_{(i,j) \in I}^\+ \val(a_{i,j}) \. \val(x)^{\. i} \. \val(y)^{\. j},
\]
and we will abuse the notation a bit to write
\[
g(x,y) = \sum_{(i,j) \in I}^\+ \val(a_{i,j}) \. x^{\. i} \.  y^{\. j}.
\]
We call $g(x,y)$ the \emph{trpoicalization} of the polynomial $f(x,y)$ and denoted $g(x,y)$ by $\T(f)$. It is not completely clear that $\AA(C) = \TT(\T(f))$, but a theorem of Kapranov~\cite{gkz94} assures it is the case.

\begin{thm}[Kapranov]
\[
\AA(C) = \TT(\T(f)).
\]
\end{thm}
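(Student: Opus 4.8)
The plan is to prove the two set inclusions $\AA(C) \subseteq \TT(\T(f))$ and $\TT(\T(f)) \subseteq \AA(C)$ separately, leaning on the two defining properties of the valuation on $K$: multiplicativity, $\val(ab) = \val(a) + \val(b)$, and the non-archimedean inequality $\val(a+b) \geq \min\{\val(a),\val(b)\}$, which becomes an equality whenever $\val(a) \neq \val(b)$. Throughout I restrict to the torus where $x, y \neq 0$ so that $\Val$ takes finite values; the boundary cases $x=0$ or $y=0$ are accounted for by the convention $\val(0) = -\infty$.

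For the inclusion $\AA(C) \subseteq \TT(\T(f))$ (the easy direction), I would take a point $(x_0,y_0) \in C$ and set $w = (X,Y) = \Val(x_0,y_0)$. By multiplicativity the valuation of the monomial $a_{i,j} x_0^i y_0^j$ is $\val(a_{i,j}) + i\val(x_0) + j\val(y_0)$, which is exactly the linear form occurring in $\T(f)$ read off at $w$, up to the sign built into the definition of $\Val$. Since $f(x_0,y_0)=0$, the terms of lowest valuation must cancel; by the non-archimedean inequality this is impossible unless the extremal value among these monomial valuations is realized by at least two indices $(i,j)$. That is precisely the condition that the minimum of the monomial valuations---equivalently, the maximum defining $\T(f)$---is attained at least twice at $w$, so $w \in \TT(\T(f))$.

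The reverse inclusion $\TT(\T(f)) \subseteq \AA(C)$ is the substantial direction and is where I expect the real work. Given $w = (X,Y)$ at which the relevant extremum is attained at least twice, I would form the initial, or leading-term, polynomial $\mathrm{in}_w(f)$, obtained by keeping only the monomials that realize the extremum and replacing each coefficient $a_{i,j}$ by its leading coefficient in the residue field, which for Puiseux series is $\C$. Because at least two monomials survive, $\mathrm{in}_w(f)$ is a nonconstant Laurent polynomial over $\C$; restricting to the edge of the Newton polygon singled out by $w$ reduces it essentially to one variable, and since $\C$ is algebraically closed it has a root with both coordinates in $\C \setminus \{0\}$. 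The heart of the argument is then a lifting step: I would promote this residue-field root to a genuine point $(x_0,y_0)$ on $C$ with $\Val(x_0,y_0) = w$, building the Puiseux-series coordinates one exponent at a time. This is a Hensel / Newton--Puiseux successive-approximation argument, and it is exactly where the algebraic closedness and Henselian completeness of $K$ become indispensable.

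The main obstacle is controlling this lifting so that the constructed solution retains the prescribed valuation $w$ at every stage and converges to an honest element of $K$ rather than to a mere approximate root. I would organize it by fixing one coordinate---choosing $y_0$ with $\Val$-value $Y$, substituting to obtain a one-variable polynomial $f(x,y_0) \in K[x]$---and then invoking the Newton--Puiseux theorem to produce a root $x_0$ whose leading exponent matches the value prescribed by $X$ and whose leading coefficient is the chosen root of $\mathrm{in}_w(f)$. Verifying that this branch has exactly the intended valuation (and not a larger one arising from an unanticipated cancellation) is the delicate point; it is forced by the hypothesis $w \in \TT(\T(f))$, which guarantees that the Newton polygon of $f(x,y_0)$ carries a face of the requisite slope.
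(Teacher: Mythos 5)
The first thing to say is that the paper does not prove this statement at all: it is stated as Kapranov's theorem with a citation to~\cite{gkz94}, precisely because its proof is outside the scope of the survey. So there is no internal argument to compare yours against; your proposal is necessarily a ``different route,'' and it is in fact the standard proof strategy from the tropical geometry literature (initial forms plus a Newton--Puiseux lifting). Your easy inclusion $\AA(C) \subseteq \TT(\T(f))$ is correct and essentially complete: multiplicativity of $\val$, together with the fact that $\val(a+b) = \min\{\val(a),\val(b)\}$ holds whenever the minimum is attained by a single term, forces the extremum among the monomial valuations to be attained at least twice at the image of any point of $C$. One caveat: with the paper's conventions, where $\Val$ negates $\val$ on the coordinates but $\T(f)$ keeps coefficients $\val(a_{i,j})$, the statement is actually off by a sign (the tropical coefficients need to be $-\val(a_{i,j})$ for the identity to hold as written); your phrase ``up to the sign built into the definition of $\Val$'' papers over a real inconsistency --- arguably the paper's, not yours --- that a complete proof would have to resolve explicitly.

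The reverse inclusion, which is the entire content of Kapranov's theorem, remains a plan rather than a proof in your writeup, and the gap is concrete. First, the choice of $y_0$: an arbitrary element of valuation $-Y$ will not do, because a special choice of its leading coefficient can produce cancellations that collapse the Newton polygon of $f(x,y_0)$ and destroy the edge of slope corresponding to $X$. One must choose the leading coefficient of $y_0$ generically (outside the zero locus of an appropriate coefficient of the initial form $\mathrm{in}_w(f)$), and proving that such a choice exists and preserves the requisite edge is exactly where the ``maximum attained at least twice'' hypothesis does its work; you correctly flag this as ``the delicate point'' but give no argument for it. Second, the lifting itself --- producing a root $x_0$ of $f(x,y_0)$ whose leading exponent is $-X$ and whose leading coefficient is the chosen residue root --- is deferred to ``a Hensel / Newton--Puiseux successive-approximation argument'' without being carried out; invoking the algebraic closedness of the Puiseux field is legitimate, but the bookkeeping that connects the Newton polygon of the specialized univariate polynomial to a root of valuation exactly $-X$ \emph{is} the proof, and it is asserted rather than established. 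In short: right strategy, complete first half, but the decisive second half is an outline of where a proof would go rather than a proof.
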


\begin{eg}
Consider $f(x,y) = x + y + 1$ be a polynomial in $K[x,y]$. Then $\T(f) = \val(1)\. x \+ \val(1)\. y   \+ \val(1) = x + y + 0$. Hence, $\TT(\T(f))$ is exactly the tropical line in Example~\ref{eg:tropicalline}.
\end{eg}

\section{Tropical Meromorphic Functions with Prescribed Roots and Poles}\label{s:prescribed}
Given a finite subset $\{d_1, d_2, \ldots, d_r\} \subset \T$, $\{m_1, m_2, \ldots, m_r\} \subset \N^r$, can we find a polynomial whose roots are the $d_1, d_2, \ldots, d_r$ with given multiplicities $m_1, m_2, \ldots, m_r$, respectively? It is clear that
\[
f(x) = K\. (x \+ d_1)^{\. m_1} \. (x \+ d_2)^{\. m_2} \. \cdots \.  (x \+ d_r)^{\. m_r},
\]
for some constant $K$, satisfies the conditions. We claim that these are the complete solutions.

\begin{thm}\label{thm:givenzeros}
Let $f(x)$ be a tropical meromorphic function. Then $f(x)$ has finitely many roots with no poles if and only if $f(x)$ is a tropical polynomial, and there exist $K \in \R$, $d_1 < d_2 < \ldots < d_r \in \T$ and $m_1, m_2, \ldots, m_r \in \N$, such that
\begin{align}
f(x) 	&= K \. (x \+ d_1)^{\. m_1} \. (x \+ d_2)^{\. m_2} \. \cdots \. (x \+ d_r)^{\. m_r} \label{eq:givenroots1}\\
	&= K \. [x^{\. n} \+ \sum^{\+}_{1 \leq i \leq r}\sum^{\+}_{1 \leq j \leq m_i} (m_1 d_1 + m_2 d_2 + \cdots + m_{i-1} d_{i-1} + j d_i) \\
	&\  \. x^{\. (n - m_1 - m_2 - \ldots - m_{i-1} - j)}] \label{eq:givenzeros2} \notag
\end{align}
where $n = m_1 + m_2 + \cdots + m_r$.
\end{thm}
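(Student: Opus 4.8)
The plan is to prove the two implications of the biconditional separately, treating the passage between the product form~\eqref{eq:givenroots1} and the explicit sum as a routine expansion once the factored form is in hand. First I would dispose of the easy direction ($\Leftarrow$): suppose $f(x) = K \. (x \+ d_1)^{\. m_1} \. \cdots \. (x \+ d_r)^{\. m_r}$. Since each factor $(x \+ d_i)^{\. m_i}$ is a tropical polynomial and tropical polynomials are closed under $\.$, the function $f$ is a tropical polynomial; evaluating gives $f(x) = K + \sum_{i=1}^r m_i \max\{x, d_i\}$. Its derivative, wherever defined, is the nondecreasing step function $f'(x) = \sum_{i : d_i < x} m_i$, which jumps by exactly $m_i > 0$ as $x$ crosses $d_i$. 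Hence by Definition~\ref{defn:mult} the roots of $f$ are precisely $d_1, \ldots, d_r$ with multiplicities $m_1, \ldots, m_r$ (the jump at $d_1 = \0$, when present, being the root at $-\infty$), there are no poles because $\w_f$ is never negative, and there are only finitely many roots. The expansion of the product into the explicit sum in the statement is then just the computation already carried out for the Tropical Fundamental Theorem of Algebra (Theorem~\ref{thm:tfta}).

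For the substantive direction ($\Rightarrow$), suppose $f(x)$ has finitely many roots and no poles. The key structural observation is that the breakpoints of the piecewise linear function $f$ are exactly the points where $\w_f(x) \neq 0$, and each such point is a root (if $\w_f(x) > 0$) or a pole (if $\w_f(x) < 0$). Since there are no poles and only finitely many roots, $f$ has finitely many breakpoints, say $d_1 < \cdots < d_r$ in $\R$, with $\w_f(d_i) = m_i > 0$. Because $\w_f \ge 0$ everywhere, the one-sided slopes of $f$ are nondecreasing, so $f$ is convex on $\R$. Moreover, as $f$ is a tropical meromorphic function on $\T$, its slope equals a constant $m \in \Z$ for all $x$ below the least breakpoint; the hypothesis of no pole at $-\infty$ forces $m \ge 0$, and $m > 0$ precisely when $-\infty$ is a root, which I record as an extra factor $x^{\. m}$, i.e. as $d_1 = \0$ with $m_1 = m$.

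It then remains to reconstruct $f$ from this data. I would form $g(x) = K \. (x \+ d_1)^{\. m_1} \. \cdots \. (x \+ d_r)^{\. m_r}$ using the breakpoints and slope jumps just extracted, including the $-\infty$ root. By the computation of the easy direction, $g'(x) = \sum_{i : d_i < x} m_i$ agrees with $f'(x)$ on every linear piece: both start from the same base slope $m = \sum_{i : d_i = \0} m_i$ for very negative $x$ and acquire the same jump $m_i$ at each $d_i$. Hence $f'(x) = g'(x)$ wherever both are defined, so $f - g$ is constant on $\R$; choosing $K$ to absorb this constant gives $f = g$ as functions, which is the desired factored form and in particular exhibits $f$ as a tropical polynomial.

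The main obstacle I anticipate is the bookkeeping at $-\infty$: one must ensure that ``no poles'' is read to include the condition at $-\infty$ (so that the leftmost slope is nonnegative) and that this leftmost slope is correctly identified with the multiplicity of the root at $\0$, so that the base slopes of $f$ and $g$ coincide and not merely their slope jumps. Once this correspondence between breakpoints/slope-jumps and the factors $(x \+ d_i)^{\. m_i}$ is set up carefully, the remainder is the convexity argument (available from the earlier theorem that a tropical polynomial restricts to a convex function on $\R$) together with the elementary fact that two piecewise linear functions with identical derivatives differ by a constant.
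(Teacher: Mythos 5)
Your proof is correct and follows essentially the same route as the paper: build the candidate $g(x) = K \. (x \+ d_1)^{\. m_1} \. \cdots \. (x \+ d_r)^{\. m_r}$ from the breakpoint/slope-jump data of $f$ and show $f \equiv g$ using the fact that a piecewise linear function is determined by its slopes together with one value. The only differences are presentational: the paper fixes $K = f(x_0) - \sum_i m_i d_i$ up front and verifies $f(x)=g(x)$ pointwise by an explicit telescoping computation (which is just your ``equal derivatives imply equal up to a constant'' argument made concrete), and it defers the $d_1 = -\infty$ case to ``similar arguments,'' whereas you treat the root at $-\infty$ inline via the base slope $m$.
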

\begin{proof}
If $f(x)$ is a tropical polynomial, then clearly $f(x)$ has finitely many roots and no poles. Conversely, suppose that $f(x)$ has finitely many roots with no poles. Let $d_1 < d_2 < \ldots < d_r \in \T$ be the poles of $f(x) $ and  $m_1, m_2, \ldots, m_r \in \N$ be the corresponding multiplicities. We will prove the case $d_1 \neq -\infty$, and the case $d_1 = - \infty$ can be proved by the similar arguments. By assumption $-\infty$ is not a root, so there when $x < d_1$, $f(x)$ is a constant function. Pick any $x_0 < d_1$. Define a tropical polynomial
\[
g(x) = K \. (x \+ d_1)^{\. m_1} \. (x \+ d_2)^{\. m_2} \. \cdots \. (x \+ d_r)^{\. m_r},
\]
where $K = f(x_0) - m_1 d_1 - m_2 d_2 - \cdots - m_r d_r$. We claim $f \equiv g$ as a function. Let $x$ be an arbitrary real number. Let
\[
k = \max \{s | x > d_s \}.
\]
Evaluate $f(x)$, we get
\begin{equation}\label{eq:evfx}
\begin{split}
f(x) 	= &f(x_0) + (d_2 - d_1) m_1 + (d_3 - d_2) (m_1 + m_2) + \cdots \\
	    &+ (d_k - d_{k-1}) (m_1 + m_2 + \cdots + m_k) \\
	    & + (x-d_k) (m_1 + m_2 + \cdots + m_k)\\
	= &f(x_0) - m_1 d_1 - m_2 d_2 - \cdots - m_{k} d_k + (m_1 + m_2 + \cdots + m_k) x.
\end{split}
\end{equation}

Evaluate $g(x)$, we obtain
\begin{align*}
g(x) 	&= K + m_1 x + m_2 x + \cdots + m_k x + m _{k+1} d_{k+1} + \cdots + m_r dr \\
	&= f(x_0) - m_1 d_1 - m_2 d_2 - \cdots - m_k d_k + (m_1 + m_2 + \cdots + m_k) x.
\end{align*}

Therefore, $f(x) = g(x)$, and expanding $g(x)$ we get the equation~\ref{eq:givenzeros2}, so we prove the theorem. 
\end{proof}

We apply Theorem~\ref{thm:givenzeros} to give an algorithm to find a tropical polynomial with prescribed roots. Let $d_1, d_2, \ldots, d_r$ be roots of multiplicities $m_1, m_2, \ldots, m_r$, respectively. Repeat a root as many times as its multiplicity, we get a new sequence of roots $b_n \geq b_{n-1} \geq \cdots \geq b_1$, where $n=m_1 + m_2 + \cdots + m_r$. Then one possible polynomial $f(x)$ with these roots is:
\[
x^n + a_{n-1} x^{n-1} + \cdots + a_0,
\]
where 
\[
a_{n-1} = b_{n},
\]
and
\[
a_i = b_{i+1} + a_{i+1} \mbox{, for all $i = n-2, n-3, \ldots, 0$}.
\]

\begin{eg}\label{eg:prezeros}
Let $f(x)$ be a polynomial with roots $1, 2, 3$ of multiplicities 1. Applying the above algorithm, one solution is the following polynomial:
\[
f(x) = x^{\.3} \+ a_2 \. x^{\. 2} \+ a_1 \. x \+ a_0,
\]
where $a_2 = 3$, $a_1 = 2 + a_2 = 5$, $a_0 = 1 + a_1 = 6$. Hence, 
\[
f(x) = x^{\.3} \+ 3\. x^{\. 2} \+ 5 \. x \+ 6.
\]
The graph of $f(x)$ is shown in Figure~\ref{fig:polytrans-a}. When we tropically multiply $f(x)$ by any nonconstant number $k$, we will move the graph up or down by $|k|$ and the roots are the same. For instance, let $g(x) = 2\. f(x) = 2 + f(x)$. The polynomial $g(x)$ has three roots $1,2$, and $3$. The graph of $g(x)$ is the graph of $f(x)$ move up by $2$, as the dotted lines in Figure~\ref{fig:polytrans-b}.

\begin{figure}[h]
\begin{center}
\subfigure[The graph of $f(x) = x^{\.3} \+ 3\. x^{\. 2} \+ 5 \. x \+ 6$.]{\label{fig:polytrans-a}\includegraphics[scale=0.7]{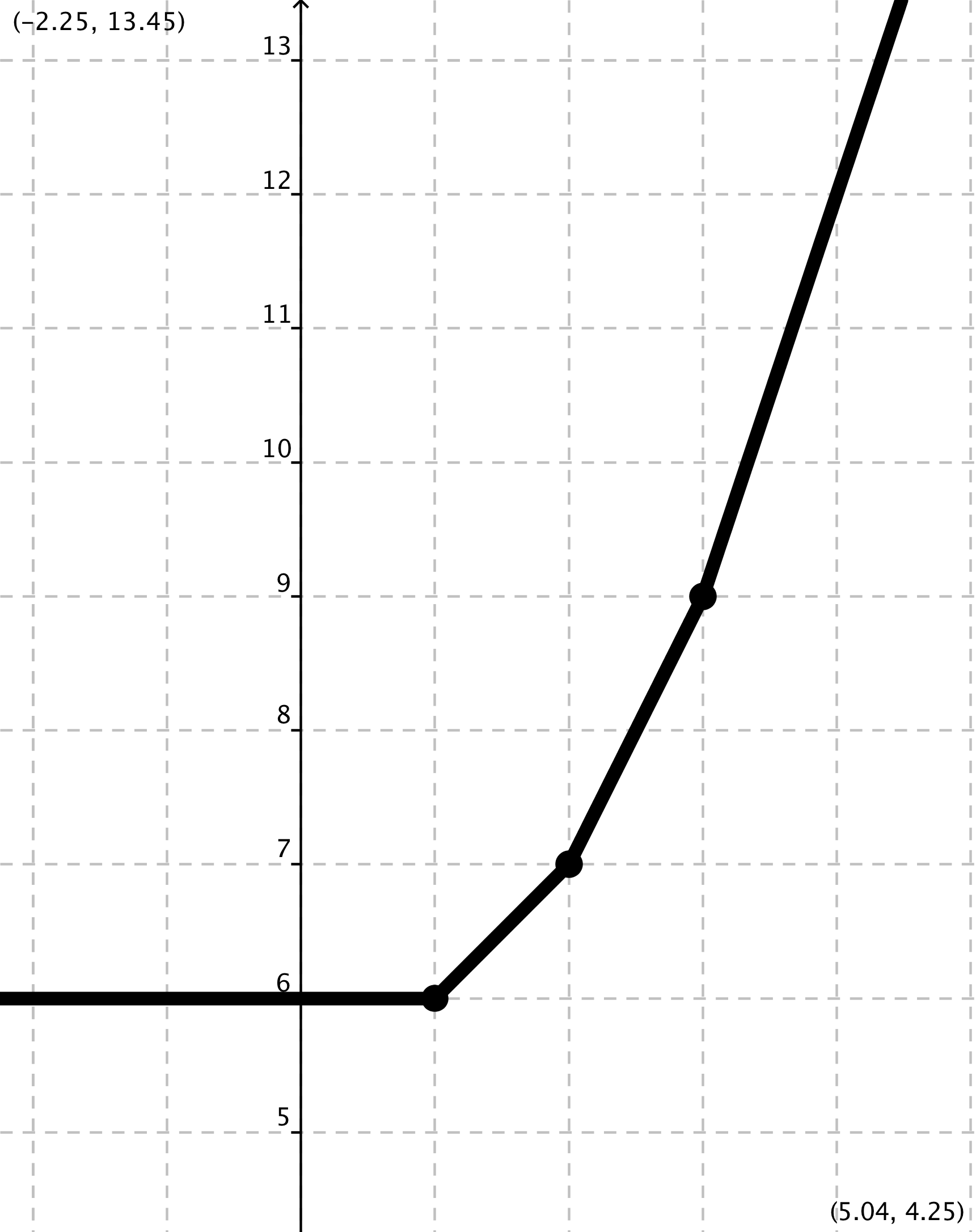}}
\subfigure[The dotted line is the graph of $2\. f(x)$.]{\label{fig:polytrans-b}\includegraphics[scale=0.7]{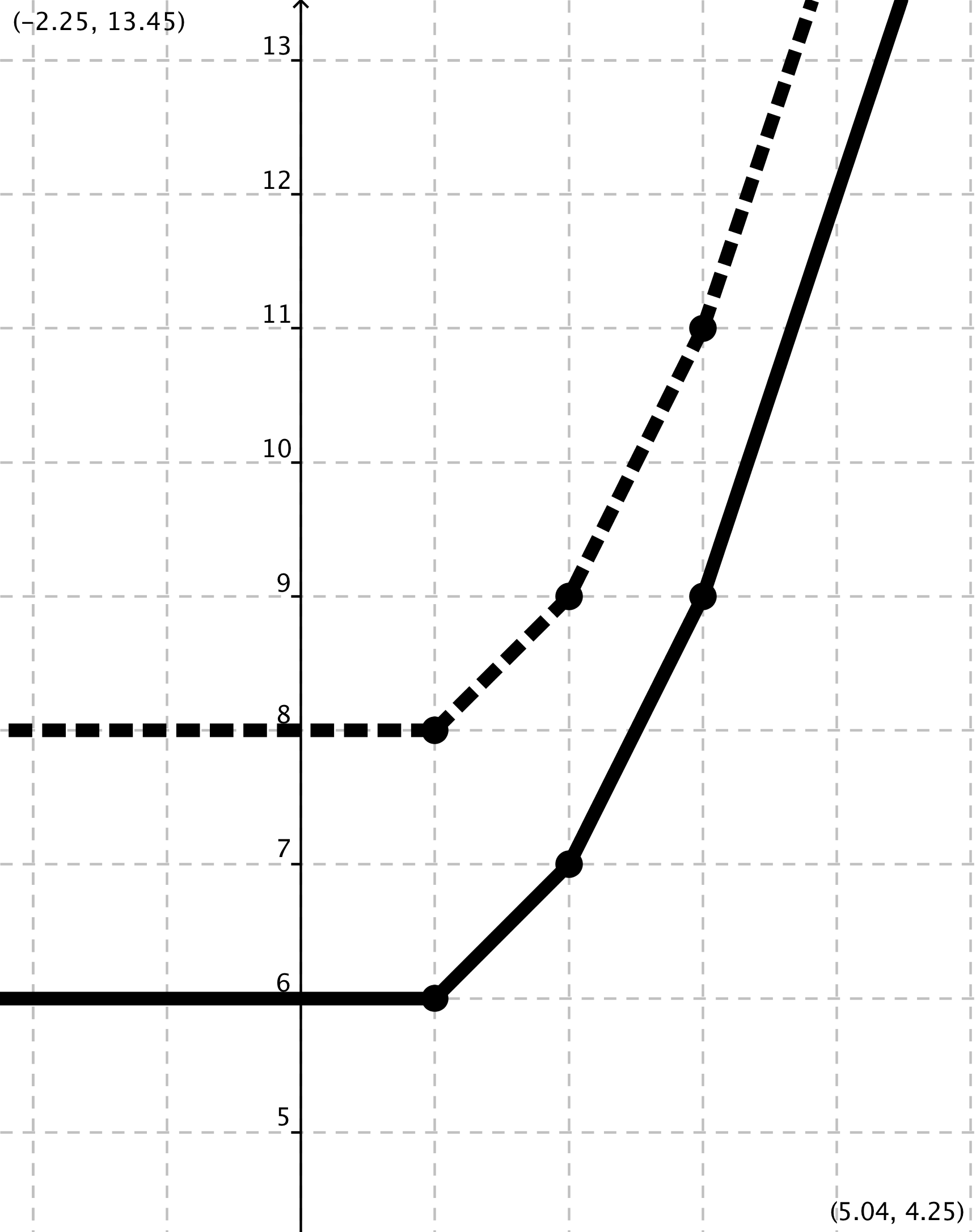}}
\end{center}
\caption{Compare the graph of the polynomial $f(x) = x^{\.3} \+ 3\. x^{\. 2} \+ 5 \. x \+ 6$ and $2 \. f(x)$.}\label{fig:polytrans}
\end{figure}
\end{eg}

\begin{cor}
 Given $d_1 < d_2 < \cdots <d_r \in \T$, and $m_1, m_2, \ldots, m_r$. There exists a tropical polynomial with roots $d_1, d_2, \ldots, d_r$ of multiplicities $m_1, m_2, \ldots, m_r$, respectively. Moreover, the tropical polynomial is unique (as a function) up to constant multiples.
\end{cor}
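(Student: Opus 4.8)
The plan is to read off both halves of the corollary from Theorem~\ref{thm:givenzeros} together with the Tropical Fundamental Theorem of Algebra. For existence, I would simply exhibit the candidate
\[
f(x) = K \. (x \+ d_1)^{\. m_1} \. (x \+ d_2)^{\. m_2} \. \cdots \. (x \+ d_r)^{\. m_r}
\]
for an arbitrary choice of constant $K \in \R$, exactly as in the construction preceding Theorem~\ref{thm:givenzeros}, and verify that its root data is precisely $d_1, \ldots, d_r$ with multiplicities $m_1, \ldots, m_r$. This verification is immediate: each factor $(x \+ d_i)^{\. m_i}$ evaluates to $\max\{m_i x, m_i d_i\}$, contributing a single slope-change of $m_i$ at $x = d_i$, and since the $d_i$ are distinct these corners do not interfere, so by Theorem~\ref{thm:tfta} the roots of the expanded product are exactly the prescribed ones. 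Tropically multiplying by $K$ (ordinary addition of $K$) shifts the whole graph vertically and leaves every slope-change untouched, so the root data is independent of $K$.

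For uniqueness up to constant multiples, I would begin with two tropical polynomials $g_1(x)$ and $g_2(x)$, each having exactly the prescribed roots and multiplicities and no others. The key preliminary observation is that a tropical polynomial has no poles: by the convexity theorem proved earlier, its graph is convex, so $\w_{g_j}(x) \geq 0$ everywhere. Hence each $g_j$ is a tropical meromorphic function with finitely many roots and no poles, and Theorem~\ref{thm:givenzeros} applies to both. That theorem forces each $g_j$ into the product form above with the same factors $(x \+ d_i)^{\. m_i}$ — these are dictated entirely by the common root data — and with (possibly different) constants $K_1$ and $K_2$. Consequently
\[
g_1(x) = (K_1 \/ K_2) \. g_2(x)
\]
as functions, which is exactly the assertion that the two polynomials agree up to a tropical constant multiple.

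The main obstacle, and the step deserving the most care, is the uniqueness direction — specifically, justifying that the product part is genuinely pinned down by the roots and multiplicities alone, leaving only $K$ free. This rests on invoking two facts cleanly: that a tropical polynomial cannot have poles, so Theorem~\ref{thm:givenzeros} is applicable and supplies the factorization, and that in that theorem's representation the constant $K$ is the sole remaining degree of freedom once the $d_i$ and $m_i$ are fixed. The case $d_1 = -\infty$ needs a brief separate mention, but as noted in the proof of Theorem~\ref{thm:givenzeros} it follows by the same argument, with the factor $x^{\. m_1}$ accounting for the root at $-\infty$.
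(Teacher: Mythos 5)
Your proposal is correct and follows exactly the route the paper intends: the corollary is stated as an immediate consequence of Theorem~\ref{thm:givenzeros}, with existence given by the explicit product $K \. (x \+ d_1)^{\. m_1} \. \cdots \. (x \+ d_r)^{\. m_r}$ and uniqueness because that theorem forces any tropical polynomial with the prescribed root data into this form, leaving only the constant $K$ free. Your added observations (convexity rules out poles so the theorem applies, and the case $d_1 = -\infty$ is handled by the factor $x^{\. m_1}$) are exactly the details the paper leaves implicit.
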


We explain the key point of the proof of the Thereom~\ref{thm:givenzeros} a bit more and try to generalize the results to tropical rational and meromorphic functions. Suppose $d_1 < d_2 < \cdots < d_r$ are some real roots with certain multiplicities $m_1, m_2, \ldots, m_r$. The left most part of the graph of the function is either with slope zero ($-\infty$ is not a root) or some positive integer $m$ ($-\infty$ is a root). In either case, we know there is a $x_0 < d_1$, and $(x_0, f(x_0))$ is on a line with slope either $m\geq 0$. Therefore, we can find the exact equation of the line. Then, $x$ moves along to the point $d_1$. The ``speed'' increases by $m_1$ and hence we can once again find the exact formula for the line. It is easy to see, the whole graph of $f(x)$ is determined once we get $(x_0, f(x_0))$ and the slope at that beginning point. 

Hence, very similar to the polynomial case, we have the following theorem for tropical rational functions.

\begin{thm}\label{thm:givenrational}
Let $f(x)$ be a tropical meromorphic function. Suppose that $f(x)$ has finitely many roots and poles if and only if $f(x)$ is a tropical rational function. Moreover, if $b_1, b_2, \ldots, b_r$ are roots of multiplicities $m_1, m_2, \ldots, m_r$ and $c_1, c_2, \ldots, c_s$ are poles of multiplicities $n_1, n_2, \ldots, n_s$, respectively, then the tropical rational function is 
\[
f(x) = g(x) \/ h(x),
\]
where
\[
g(x) = K \. (x \+ b_1)^{\. m_1} \. (x \+ b_2)^{\. m_2} \. \cdots \. (x \+ b_r)^{\. m_r},
\]
and
\[
h(x) = (x \+ c_1)^{\. n_1} \. (x \+ c_2)^{\. n_2} \. \cdots \. (x \+ c_s)^{\. n_s},
\]
for some $K \in \R$.
\end{thm}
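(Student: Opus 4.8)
The plan is to prove the biconditional in two directions and then pin down the explicit factored form. For the easy direction, suppose $f$ is a tropical rational function, so by Definition~\ref{defn:rational} there are tropical polynomials $g, h$ with $f = h \/ g$; as a function this means $f(x) = h(x) - g(x)$, since tropical division is ordinary subtraction. Both $h$ and $g$ are continuous piecewise linear with only finitely many corners, and the change-of-slope operator from Definition~\ref{defn:mult} is additive under subtraction, so $\w_f(x) = \w_h(x) - \w_g(x)$. Hence $\w_f$ is supported on the finite set consisting of the corners of $h$ together with the corners of $g$, which shows $f$ has only finitely many roots and poles.

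For the converse, I would start from the data: roots $b_1, \ldots, b_r$ with multiplicities $m_1, \ldots, m_r$ and poles $c_1, \ldots, c_s$ with multiplicities $n_1, \ldots, n_s$, and simply \emph{define} $g$ and $h$ by the two displayed product formulas. Applying Theorem~\ref{thm:givenzeros} separately to $g$ and to $h$ identifies their roots exactly: $g$ is a tropical polynomial whose roots are precisely the $b_i$ with the prescribed multiplicities, and $h$ is a tropical polynomial whose roots are precisely the $c_j$ with the prescribed multiplicities (a factor $b_i = \0$ or $c_j = \0$ being encoded by a lowest-degree monomial, exactly as in the factorization of Theorem~\ref{thm:tfta}). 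Then $F := g \/ h = g - h$ is, by Definition~\ref{defn:rational}, a tropical rational function, and it is the natural candidate for $f$.

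The heart of the argument is to show $F = f$ up to the free constant $K$. First I would compute $\w_F = \w_g - \w_h$ and observe that because the roots and poles of $f$ occupy distinct points, $g$ has no corner at any $c_j$ and $h$ has no corner at any $b_i$; therefore $\w_F$ equals $m_i$ at each $b_i$, equals $-n_j$ at each $c_j$, and vanishes at every other real point, matching $\w_f$ everywhere on $\R$. Next I would match the leftmost slopes using Lemma~\ref{lem:m}(b): for $x$ sufficiently negative the slope of $F$ is the lowest degree of $g$ minus the lowest degree of $h$, i.e.\ the multiplicity of $\0$ as a root of $f$ minus its multiplicity as a pole, which is exactly the leftmost slope of $f$. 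Since $f$ and $F$ share the same change-of-slope function and the same slope on the leftmost ray, their one-sided derivatives agree on every interval, so $f - F$ has zero derivative throughout and is therefore a constant $\beta \in \R$. Absorbing $\beta$ into $K$ (replacing $K$ by $K + \beta$ shifts $F$ vertically without moving any root or pole) yields $f = g \/ h$ with the stated $g$ and $h$; uniqueness up to $K$ follows because any two such representations share the same $\w$ and the same leftmost slope.

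The step I expect to be the main obstacle is the bookkeeping at $\0 = -\infty$. One must separately treat the cases in which $-\infty$ is a root, a pole, or an ordinary point, and verify in each case that the lowest-degree terms of the constructed $g$ and $h$ reproduce the correct leftmost slope of $f$. This matching is precisely what forces the difference $f - F$ to be a genuine constant rather than merely an affine function with nonzero slope, and it is the only place where the global (as opposed to purely local) behavior of the functions enters.
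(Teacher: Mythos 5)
Your proof is correct and follows essentially the same route as the paper: the paper (in the discussion preceding the theorem) reduces the rational case to the argument of Theorem~\ref{thm:givenzeros}, namely that a tropical meromorphic function is determined by its leftmost slope together with its slope-change data, which is exactly your comparison of $\w_F$ with $\w_f$ and of the slopes near $-\infty$. The only cosmetic difference is that the paper fixes the constant in advance (choosing $K$ so that the candidate agrees with $f$ at a point $x_0$ left of all roots and poles, then verifying equality by explicit telescoping evaluation), whereas you let the constant float and absorb it into $K$ at the end.
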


Finally, given a prescribed roots and poles, there is a unique tropical meromorphic function up to constant multiplicities.

\begin{thm}\label{thm:givenmero}
Let $\mathcal{Z} = \{b_i\}_{i \in I} \subset \T$ and $\mathcal{P} = \{c_j \}_{j \in J} \subset \T$, where $I$ and $J$ are at most countable index sets,  such that $\mathcal{Z} \cap \mathcal{P} = \emptyset$. Let $\{ m_i \}_{i \in I}$ and $\{ n_j \}_{j \in J}$ are collections of positive integers. Then there exists a unique tropical meromorphic function $f(x)$ (up to constant multiplies) such that $f(x)$ has roots $\mathcal{Z}$ of multiplicities $\{ m_i \}_I$ and pols $\mathcal{P}$ of multiplicities $\{ n_j \}_J$, respectively.
\end{thm}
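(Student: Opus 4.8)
The plan is to reduce everything to the slope description of a tropical meromorphic function used in the discussion preceding Theorem~\ref{thm:givenrational}: such a function is completely pinned down by one base point $(x_0, f(x_0))$, the slope it carries to the left of all of its roots and poles, and the list of signed slope jumps it undergoes at those points. The constant-multiple ambiguity in the statement will correspond exactly to the freedom in the value $f(x_0)$.

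First I would set up the data. Because $f$ is required to be a tropical meromorphic function on $\T$, its finite roots and poles must form a subset of $\R$ that is discrete and bounded below: discreteness is forced by piecewise linearity in Definition~\ref{defn:rmero}, and boundedness below is forced by the requirement that $f'$ be constant on some left ray, which is the condition defining meromorphy on $\T$. Hence I may list the finite members of $\mathcal{Z} \cup \mathcal{P}$ in strictly increasing order as $e_1 < e_2 < \cdots$ (a finite list, or a sequence tending to $+\infty$), attaching to each $e_k$ the signed weight $\delta_k := m_i$ when $e_k = b_i \in \mathcal{Z}$ and $\delta_k := -n_j$ when $e_k = c_j \in \mathcal{P}$. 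The data at $-\infty$ fixes an integer base slope $s_0$, namely $s_0 = m_{i_0}$ if $-\infty \in \mathcal{Z}$, $s_0 = -n_{j_0}$ if $-\infty \in \mathcal{P}$, and $s_0 = 0$ otherwise.

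For existence, I would fix any base point $x_0 < e_1$ and any value $f(x_0) \in \R$, and define $f$ to be the continuous piecewise linear function whose slope on $(-\infty, e_1)$ equals $s_0$ and whose slope increases by exactly $\delta_k$ as $x$ passes $e_k$. Concretely, for each $x$ only finitely many $e_k$ lie below $x$ (by boundedness below together with discreteness), so the slope at $x$ is the finite sum $s_0 + \sum_{e_k < x} \delta_k$, and $f(x)$ is recovered by integrating this step function outward from $x_0$; no convergence question arises. I would then check the defining properties: $f$ is continuous and piecewise linear by construction; its one-sided derivatives are integers because $s_0$ and every $\delta_k$ is an integer; and $f' \equiv s_0$ on $(-\infty, e_1)$, so $f$ is meromorphic on $\T$. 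Finally $\w_f(e_k) = \delta_k$ and $\w_f \equiv 0$ off $\{e_k\}$, so $f$ has precisely the prescribed roots $\mathcal{Z}$ and poles $\mathcal{P}$ with the prescribed multiplicities, the behavior at $-\infty$ being governed by $s_0$ as in the definition of a root or pole at $-\infty$.

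For uniqueness, suppose $f$ and $\tilde f$ both realize the data. Then $\w_f(x) = \w_{\tilde f}(x)$ for every $x \in \R$, so they have identical slope jumps at every point, and their left-ray slopes both equal $s_0$, the common multiplicity at $-\infty$. Therefore $f'(x) = \tilde f'(x)$ wherever both are differentiable, $f - \tilde f$ has zero slope on every linear piece, and being continuous it is a constant $c$; thus $f = c \. \tilde f$, which is the asserted uniqueness up to a tropical constant multiple. The part needing the most care is the set-up combined with the local-finiteness point: the theorem is only meaningful once one knows an admissible $\mathcal{Z} \cup \mathcal{P}$ is discrete and bounded below in $\R$, since an accumulation point would destroy piecewise linearity and an unbounded-below family would violate meromorphy on $\T$. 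Once that is in place, the countable case is no harder than the finite case, because every value $f(x)$ depends on only finitely many of the prescribed points.
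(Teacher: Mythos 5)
Your proof is correct and follows essentially the same route the paper intends: the paper gives no separate proof of Theorem~\ref{thm:givenmero}, relying instead on the discussion preceding Theorem~\ref{thm:givenrational} that a tropical meromorphic function is completely determined by a base point $(x_0, f(x_0))$, its left-ray slope, and its slope jumps at the roots and poles, which is exactly the argument you formalize (existence by integrating the prescribed step-function slope outward, uniqueness because equal jump data forces the difference to be a constant). Your one genuine addition is the observation that the data $\mathcal{Z} \cup \mathcal{P}$ (apart from $-\infty$) must be discrete in $\R$ and bounded below for any such $f$ to exist --- a hypothesis the theorem as stated omits, and without which existence fails (e.g.\ for a countable dense set of prescribed roots), so flagging it is an improvement on the paper rather than a digression.
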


\section{Extended Tropical Meromorphic Functions}
In this section, we elaborate some subtle different definitions of tropical meromorphic functions on recently researches. In~\cite{lt09}, a tropical meromorphic function is allowed to have real slopes, we call this kind of tropical meromorphic function an extended tropical meromorphic function.

\begin{defn}
An extended tropical polynomial is of the form:
\[
f(x) = a_n \. x^{r_n} \+ a_{n-1} \. x^{r_{n-1}} \+ a_1 \. x^{r_1},
\]
where $r_i \in \R^+$ for all $i = 1, 2, \ldots, n$.
\end{defn}

The definition of roots, poles, and multiplicities of tropical meromorphic functions can still apply to extended ones. Extended tropical meromorphic functions are not directly from the classical geometry as we explained in Section~\ref{s:tropicalization}. However, these functions arise naturally in many real world problems, therefore have many possible applications. 

Recall that an $\R$-tropical meromorphic function $f(x)$ is a piecewise linear function with integer slopes, and it is different from a tropical meromorphic function just because $-\infty$ is not in the domain. Hence, we do not consider $-\infty$ as either a root or a pole for any  $\R$-tropical meromorphic function. 
\begin{eg}
As in Example~\ref{eg:prezeros}, let $f(x) = x^{\.3} \+ 3\. x^{\. 2} \+ 5 \. x \+ 6$ be an $\R$-tropical polynomial with roots $1, 2, 3$ of multiplicities $1$. Let $g(x) = x\. f(x)$ be another $\R$-tropical polynomial, where $f(x)$ and $g(x)$ are not a tropical multiple of each other, so Theorem~\\ref{thm:givenzeros} fails. The theorem fails because $g(x)$ actually has one more root than $f(x)$ has, namely $-\infty$, which we do not count in $\R$-tropical meromorphic functions. The graphs are as in Figure~\ref{fig:xfx}. 

\begin{figure}
\begin{center}
\includegraphics[scale=0.7]{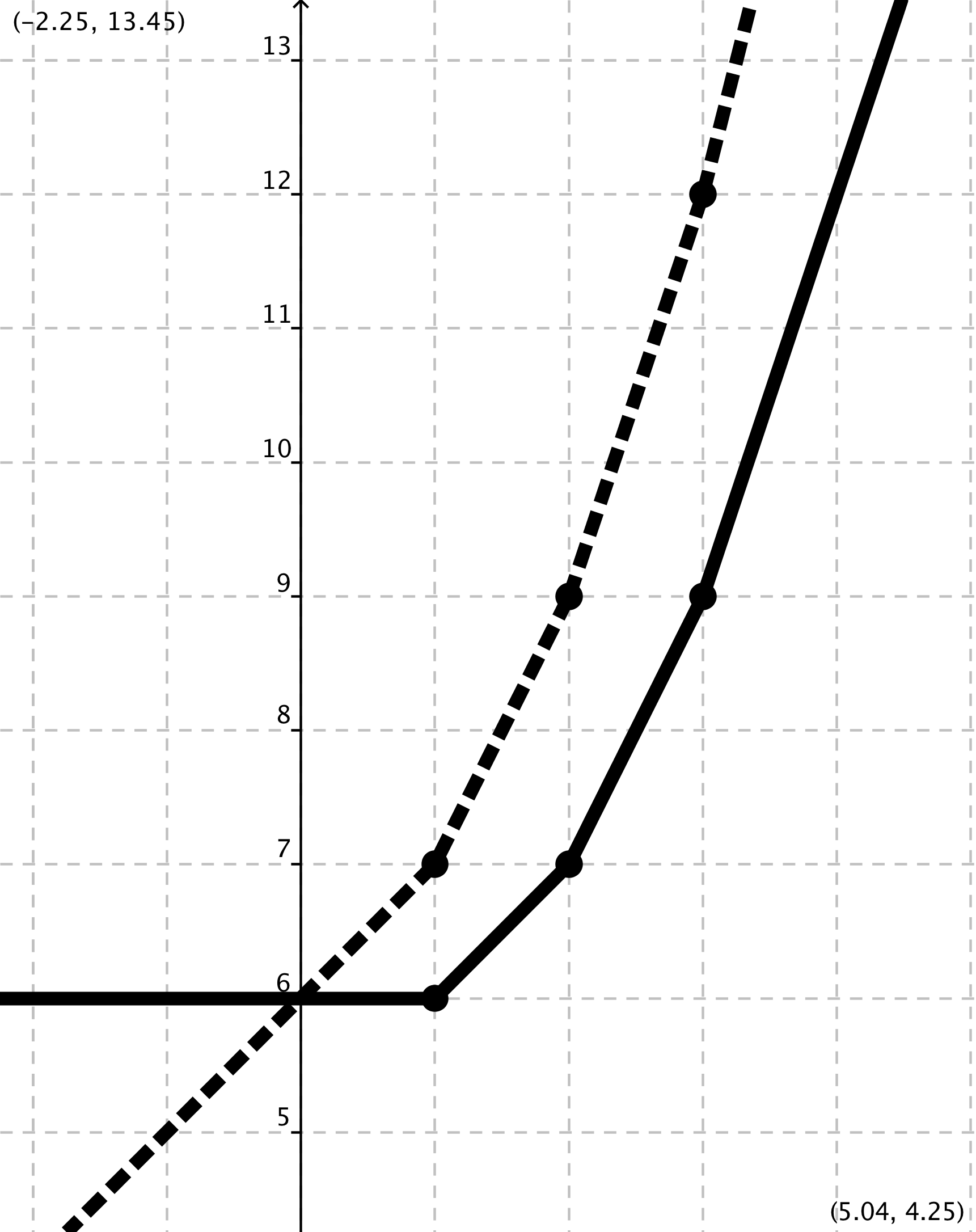}
\end{center}
\caption{Compare $f(x) =  x^{\.3} \+ 3\. x^{\. 2} \+ 5 \. x \+ 6$ with $x \cdot f(x)$ (dotted line).}\label{fig:xfx}
\end{figure}
\end{eg}

Theorems~\ref{thm:givenzeros}, \ref{thm:givenrational},  \ref{thm:givenmero} will hold for a slightly modification. With prescribed roots and poles, $f(x)$ and $g(x)$ are unique up to a linear term. That is 
\[
f(x) = g(x) + mx + b = b\.x^{\.m} \. g(x),
\]
for some $m \in \Z$ and $b \in \R$.

$\R$-tropical meromorphic functions some times are more general as the following example shows.

\begin{eg}\label{eg:period}
The function in Figure~\ref{fig:period} is a period piecewise linear function of all integers slopes on $\R$ is $\R$-tropical meromorphic, but not tropical meromorphic.

\begin{figure}[h]
\begin{center}
\includegraphics[scale=0.7]{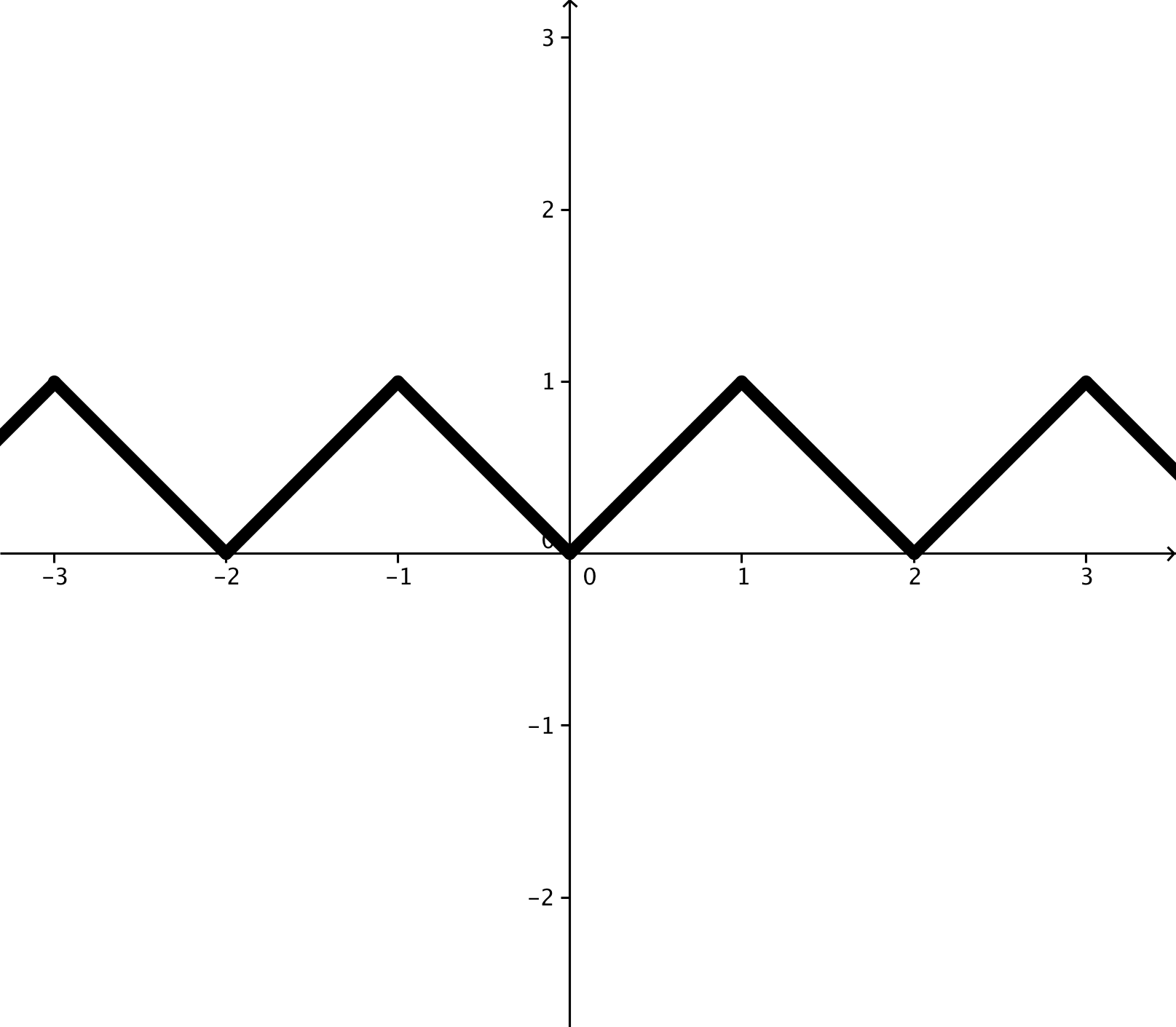}
\caption{A period piecewise linear function of all integer slopes is $\R$-tropical meromorphic.}
\label{fig:period}
\end{center}
\end{figure}
\end{eg}

On the other hand, our definition of tropical meromorphic functions is more naturally from classical geometry. Many properties are more reasonable in some sense.

\begin{eg}
It is interesting that $f(x) = |x|$ (Figure~\ref{fig:abs}) can be treated as a tropical rational function. It has a root of multiplicity $2$ at $x=0$. If we consider $f(x)$ a $\R$-tropical meromorphic function, we would think it is a polynomial since there is no pole. However, $f(x)$ is not a tropical polynomial. It has $-\infty$ as a pole of multiplicity $1$. Thereofre,
\[
f(x) = (x \+ 0)^{\. 2} \/ x.
\]

\begin{figure}[h]
\begin{center}
\includegraphics[scale=0.7]{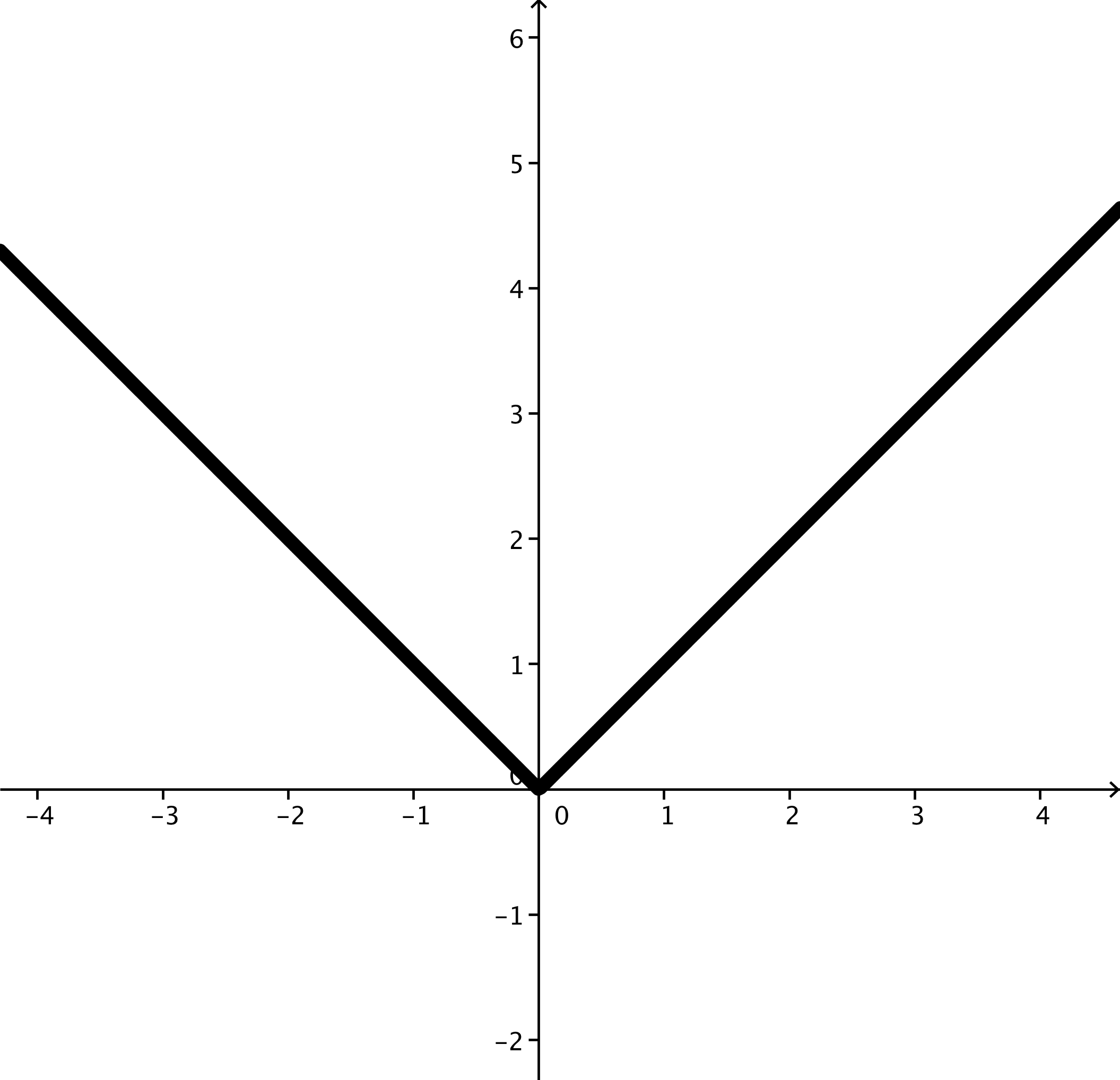}
\caption{$f(x) = |x|$ is a tropical rational function $(x \+ 0)^{\. 2} \/ x$.}
\label{fig:abs}
\end{center}
\end{figure}
\end{eg}

\section{More Theorems Related to Complex Analysis}
In complex analysis, Liouville's theorem says that if $f(x)$ is  a bounded entire function then $f(x)$ is constant. The tropical version of Liouville's theorem is trivial.

\begin{thm}[Tropical Liouville's Theorem]
Let $f(x)$ be a tropical entire function (a tropical meromorphic function with no pole). If $f(x)$ is bounded then $f(x)$ is constant.
\end{thm}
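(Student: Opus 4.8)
The plan is to translate the analytic hypothesis into a purely order-theoretic statement about slopes: I will show that being \emph{entire} forces the right-derivative of $f$ to be a non-decreasing function that is pinned nonnegative from the left, and then let boundedness squeeze every slope to zero.

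First I would unpack ``tropical entire'' in the language of Definition~\ref{defn:mult}. Saying that $f$ has no pole at a finite point is exactly the assertion that $\w_f(x) \geq 0$ for every $x \in \R$; this is the same change-of-slope computation already carried out to show that tropical polynomials induce convex functions, so the right-derivative $f'$ is non-decreasing on $\R$. Saying that $f$ has no pole at $-\infty$ means the constant leftmost slope $m$ satisfies $m \geq 0$. Since $f'$ is non-decreasing and equals $m$ for all $x$ below some $x_0$, the value $m$ is the infimum of all slopes, and therefore $f'(x) \geq m \geq 0$ for every $x \in \R$.

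Next I would bring in boundedness. Suppose for contradiction that $f$ is non-constant. Then some slope is strictly positive, say $f'(x_1) = s > 0$ (a non-constant piecewise-linear function all of whose slopes are $\geq 0$ must have a strictly positive slope somewhere). Monotonicity of $f'$ gives $f'(x) \geq s$ for all $x \geq x_1$, whence $f(x) \geq f(x_1) + s(x - x_1) \to +\infty$ as $x \to +\infty$, contradicting boundedness (note that only boundedness above is used). Hence $f'(x) \leq 0$ for all $x$, and together with $f'(x) \geq 0$ this yields $f' \equiv 0$, so $f$ is constant.

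The computation is short, so the one point deserving care---and, I think, the real content hiding behind the word ``trivial''---is the passage from the local, possibly break-point-dense, piecewise-linear data to a single global monotone slope function. I would argue entirely through the monotonicity of the right-derivative supplied by convexity, rather than assuming a finite piecewise decomposition, and I would keep both no-pole conditions in play: convexity alone only says the slopes increase, but it is precisely the hypothesis that $-\infty$ is not a pole that forces the baseline slope $m$ to be nonnegative, closing the squeeze $0 \leq m \leq f'(x) \leq 0$.
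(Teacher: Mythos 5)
Your proof is correct, and it is essentially the argument the paper has in mind when it dismisses the theorem as ``trivial'' without writing a proof: no finite poles makes the slope function non-decreasing (convexity), no pole at $-\infty$ pins the initial slope at $m \geq 0$, and boundedness then forces every slope to be zero. Your added care about using monotonicity of the right-derivative rather than a finite piecewise decomposition, and your observation that only boundedness above is needed, are sound refinements of that omitted argument.
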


The maximum modulus theorem in complex analysis says that if $G$ is a region of $\C$ and $f(x)$ is a complex analytic function such that there is a point $z$ in $G$ such that $|f(z)| > |f(x)|$ for all $x \in G$. Then $f(x)$ is constant. We do not have exactly the same maximum modulus theorem, but the fact that both maximum and minimum of a tropical function with no poles (even locally) appear at end points is trivial.

\begin{thm}[Tropical Maximum Modulus Theorem]
Let $I = [a,b]$ be a closed interval of $\R$. Let $f(x)$ be a tropical meromorphic function and there is a tropical entire function $F(x)$ such that $F|_I (x) = f|_I (x) $.  Then $\max \{ f(x) | a \leq x \leq  b \} = f(b)$ and $\min \{ f(x) | a \leq x \leq b \} = f(a)$.
\end{thm}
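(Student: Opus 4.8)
The plan is to show that the hypothesis ``$F$ is a tropical entire function'' forces $F$, and hence $f$ on $I$, to be \emph{non-decreasing}; once monotonicity is in hand the two equalities are immediate. The proof splits into two uses of the ``no pole'' assumption—one at finite points and one at $-\infty$—and it is the interplay of these two that yields the result.

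First I would record that $F$ is convex on $\R$. At every finite point $x$ the quantity $\w_F(x)$ is defined as in Definition~\ref{defn:mult}, and since $F$ is entire it has no pole, so $\w_F(x) \geq 0$ for all $x \in \R$. A continuous piecewise-linear function whose slope change is non-negative at every point is convex; this is exactly the argument already used to prove that a tropical polynomial restricted to $\R$ is convex, so I would simply invoke that reasoning. In particular the one-sided derivative $F'$ is a non-decreasing step function.

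The second step pins down the sign of the leftmost slope. By the definition of a tropical meromorphic function on $\T$ there exist $x_0 \in \R$ and $m \in \Z$ with $F'(x) = m$ for all $x < x_0$. Because $-\infty$ is not a pole of the entire function $F$, the case $m < 0$ is excluded, so $m \geq 0$. Combining with convexity, $F'(x) \geq m \geq 0$ for every $x$, so $F$ is non-decreasing on all of $\R$. Restricting to $I = [a,b]$ and using $f|_I = F|_I$, the function $f$ is non-decreasing on $[a,b]$, whence $\max\{f(x) : a \leq x \leq b\} = f(b)$ and $\min\{f(x) : a \leq x \leq b\} = f(a)$.

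The only real subtlety—and the step I would be most careful about—is that \emph{both} parts of the entireness hypothesis are needed. Convexity alone (no finite poles) guarantees only that the maximum is attained at some endpoint, not that it is at $b$ rather than $a$; a convex function with a negative leftmost slope would dip, placing its minimum in the interior and its maximum at $a$. It is precisely the absence of a pole at $-\infty$, forcing $m \geq 0$, that rules this out and upgrades ``max at an endpoint'' to the precise statement $\max = f(b)$, $\min = f(a)$.
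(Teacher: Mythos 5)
Your proof is correct. The paper in fact offers no proof at all for this theorem --- it is stated right after the remark that ``the fact that both maximum and minimum of a tropical function with no poles (even locally) appear at end points is trivial'' --- so your argument supplies exactly the details the paper omits, and it does so by what is surely the intended route: absence of finite poles gives $\w_F(x)\geq 0$ everywhere, hence convexity (the same argument the paper uses to show a tropical polynomial restricted to $\R$ is convex); absence of a pole at $-\infty$ gives a leftmost slope $m\geq 0$; convexity then forces $F'\geq m\geq 0$ everywhere, so $F$, and hence $f$ on $I$, is non-decreasing. Your closing observation is moreover a genuine improvement on the paper's prose: ``no poles (even locally)'' alone yields only convexity, which places the maximum at an endpoint but perfectly well allows the minimum to sit in the interior (e.g.\ $|x|$ on $[-1,1]$, which the paper itself points out has a pole at $-\infty$ and is therefore not entire). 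The precise conclusion $\max = f(b)$ and $\min = f(a)$ really does require the pole-free condition at $-\infty$, exactly as you say, so your insistence on using both halves of the entireness hypothesis is warranted rather than pedantic.
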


\bibliographystyle{hplain}
\bibliography{reference}
\end{document}